\newtheorem{theorem}{Theorem}[section]
\newtheorem{lemma}[theorem]{Lemma}
\newtheorem{proposition}[theorem]{Proposition}
\newtheorem{corollary}[theorem]{Corollary}
\theoremstyle{definition}
\newtheorem{question}[theorem]{Question}
\theoremstyle{remark}
\newtheorem{remark}[theorem]{Remark}
\numberwithin{equation}{section}
\def\fnote#1{\footnote}
\def\real{{\mathbb R}}
\def\F{{\mathcal F}}
\def\R{{\mathbb R}}
\def\om{{\omega}}
\def\la{{\langle}}
\def\ra{{\rangle}}
\def\ep{{\varepsilon}}
\def\dw{{d_{\omega}}}
\def\ignora#1{}
\def\n3#1{\left\vert  \! \left\vert \! \left\vert \, #1 \, \right\vert \!
  \right\vert \! \right\vert }
\newcommand{\iten}{\ensuremath{\widehat{\otimes}_\varepsilon}}
\newcommand{\pten}{\ensuremath{\widehat{\otimes}_\pi}}
\title{On the structure of spaces of vector-valued Lipschitz functions }
\author{Luis Garc\'ia-Lirola}\thanks{First author was partially supported by the grants MINECO/FEDER MTM2014-57838-C2-1-P and Fundaci\'on S\'eneca CARM 19368/PI/14 and by the Region de Franche-Comté.}
\address{Universidad de Murcia, Facultad de Matem\'aticas. Departamento de Matem\'aticas,
30100-Murcia, Spain}\email{luiscarlos.garcia@um.es}
\author{Colin Petitjean}\thanks{Second author was partially supported by the grants ECOS-Sud program - Action number C14E06.}
\address{Universit\'e Franche-Comt\'e\\
Laboratoire de Math\'ematiques UMR 6623\\
16 route de Gray\\
25030 Besan\c con Cedex\\
France}\email{colin.petitjean@univ-fcomte.fr}
\author{Abraham Rueda Zoca}\thanks{The third author was partially supported by Junta de Andaluc\'ia Grants FQM-0199.}
\address{Universidad de Granada, Facultad de Ciencias.
Departamento de An\'{a}lisis Matem\'{a}tico, 18071-Granada, Spain} \email{ arz0001@correo.ugr.es}\urladdr{\url{https://arzenglish.wordpress.com}}
\keywords{Vector-valued; Lipschitz-free; Duality; Schur properties; Norm-attainment.}
\subjclass[2010]{Primary 46B10, 46B28; Secondary 46B20}
\date{}
\begin{document}

\begin{abstract}
We analyse the strong connections between spaces of vector-valued Lipschitz functions and spaces of linear continuous operators. We apply these links to study duality, Schur properties and norm attainment in the former class of spaces as well as in their canonical preduals.
\end{abstract}

\maketitle

\section{Introduction}
\bigskip

The problem of whether Lipschitz-free Banach spaces ($\mathcal F(M)$), which are canonical preduals of spaces of Lipschitz functions ($Lip(M)$), are themselves dual ones has been studied for a long time (see e.g. \cite{dal1,dal2,kalton2004,weaver}). For instance, given a compact (respectively proper) metric space $M$, it is known that $\mathcal F(M)$ is the dual of $lip(M)$ (respectively $S(M)$) under some additional assumptions on $M$ (see formal definitions below). The vector-valued version of previous spaces $\mathcal F(M,X)$ has been recently introduced in \cite{blr} as a predual of the space of vector-valued Lipschitz functions $Lip(M,X^*)$ in the spirit of the scalar version. Moreover, in \cite[Proposition 1.1]{blr}, it is proved that $\mathcal F(M,X)$ is linearly isometrically isomorphic to $\mathcal F(M)\pten X$. So, basic tensor product theory yields a canonical predual of vector-valued Lipschitz-free Banach spaces, namely the injective tensor product of the predual of each factor, whenever they exist and verify some natural conditions. On the other hand, natural vector-valued extensions of the preduals of $\mathcal F(M)$, namely $lip(M,X)$ and $S(M,X)$, have been recently considered in \cite{gr,jsv} and identified with a suitable subspace of compact operators from $X^*$ to $Lip(M)$. Consequently, in order to generalise the preduality results to the vector-valued setting, it is a natural question whether the equality $S(M,X)=S(M)\iten X$ holds. We will show that it is the case under suitable assumptions on $M$ and $X$.

The identification of vector-valued Lipschitz-free Banach spaces with a projective tensor product not only motivates the problem of analysing duality but also other properties. Some of them have been even analysed in the scalar version such as approximation properties or Schur property. As approximation properties are preserved by injective as well as by projective tensor products \cite{h}, it is straightforward that such properties on $\mathcal F(M,X)$ actually rely on the scalar case $\mathcal F(M)$. Nevertheless, it is an open problem if the Schur property is preserved by projective tensor product \cite[Remark 6]{gg}, so it is natural to wonder which conditions on $M$ and $X$ guarantee that $\mathcal F(M,X)$ enjoys the Schur property, a problem which has been previously considered in the scalar framework \cite{hlp,kalton2004}.

Finally, it is a natural question how different notions of norm-attainment in $Lip(M,X)$ are related. On the one hand, in this space there is a clear notion of norm-attainment for a Lipschitz function. On the other hand, the equality $Lip(M,X)=L(\mathcal F(M),X)$ yields the classical notion of norm-attainment considered in spaces of linear and continuous operators. So we can wonder when the previous concepts agree and, in such a case, analyse when the class of Lipschitz functions which attain its norm is dense in $Lip(M,X)$,
a problem motivated by the celebrated Bishop-Phelps theorem and recently studied in \cite{godsurvey} and \cite{kadets}.

The paper is organised as follows. In Section \ref{duality} we will explore the duality problem and get the two main results of the section. On the one hand, in Theorem \ref{predulibrevectorproper} we get that $S(M,X)^*=\mathcal F(M,X^*)$ whenever $M$ is proper satisfying $S(M)^*=\mathcal F(M)$ and either $X^*$ or $\mathcal F(M)$ has the approximation property. On the other hand, we prove in Theorem \ref{tauduality} that $lip_\tau(M,X)^*=\mathcal F(M,X^*)$ whenever either $\mathcal F(M)$ or $X^*$ has the approximation property and $(M,\tau)$ satisfies the assumptions under which $\mathcal F(M)=lip_\tau(M)^*$ in \cite[Theorem 6.2]{kalton2004}. So, previous results are vector-valued extensions of the preduality results in the real case given in \cite{dal2} and \cite{kalton2004}. In Section \ref{schur} we take advantage of the theory of tensor products to study the (hereditary) Dunford-Pettis property on $S(M,X)$ and the (strong) Schur property on $\mathcal F(M,X)$. More precisely, we prove that $S(M,X)$ has the hereditary Dunford-Pettis property and does not contain any isomorphic copy of $\ell_1$ whenever $X$ satisfies those two conditions and $M$ is a proper metric space such that $S(M)^*=\mathcal F(M)$ (Theorem 3.2). As a direct corollary,  we obtain under the same assumptions that $\mathcal F(M,X^*)$ has the strong Schur property. We end this section by extending a result of Kalton to the vector-valued setting by proving that if $M$ is uniformly discrete and $X$ has the Schur property, then $\mathcal F(M,X)$ has the Schur property (Proposition 3.4). This last result provides examples of Banach spaces with the Schur property such that their projective tensor product also enjoys this property. Furthermore, in Section \ref{na}, we will deal with the problem of norm-attainment, proving in Proposition \ref{vectorna} denseness of $NA(\mathcal F(M),X^{**})$ in $L(\mathcal F(M),X^{**})$ whenever $M$ is a proper metric space satisfying $S(M)^*=\mathcal F(M)$ and either $\mathcal F(M)$ or $X^*$ has the approximation property. Finally, in Section \ref{rema} we will pose some open problems and make some related comments.

\textbf{Notation}. Given a metric space $M$, $B(x,r)$ (respectively $\overline{B}(x,r)$) denotes the open (respectively closed) ball in $M$ centered at $x\in M$ with radius $r$. According to \cite{kalton2004}, by a \textit{gauge} we will mean a continuous, subadditive and increasing function $\omega:\mathbb R_0^+\longrightarrow \mathbb R_0^+$ verifying that $\omega(0)=0$ and that $\omega(t)\geq t$ for every $t\in [0,1]$. We will say that a gauge $\omega$ is \textit{non-trivial} whenever $\lim\limits_{t\rightarrow 0}\frac{\omega(t)}{t}=\infty$. Throughout the paper we will only consider real Banach spaces. Given a Banach space $X$, we will denote by $B_X$ (respectively $S_X$) the closed unit ball (respectively the unit sphere) of $X$. We will also denote by $X^*$ the topological dual of $X$. We will denote by $X\pten Y$ (respectively $X\iten Y)$ the projective (respectively injective) tensor product of Banach spaces. For a detailed treatment and applications of tensor products, we refer the reader to \cite{rya}. In addition, $L(X,Y)$ (respectively $K(X,Y)$) will denote the space of continuous (respectively compact) operators from $X$ to $Y$. Moreover, given topologies $\tau_1$ on $X$ and $\tau_2$ on $Y$, we will denote by $L_{\tau_1,\tau_2}(X,Y)$ and $K_{\tau_1,\tau_2}(X, Y)$ the respective subspaces of $\tau_1$-$\tau_2$ continuous operators. According to \cite[Proposition 4.1]{rya}, a Banach space $X$ is said to have the \textit{approximation property (AP)} whenever given a compact subset $K$ of $X$ and a positive $\varepsilon$ there exists a finite rank operator $S\in L(X,X)$ such that $\Vert x-S(x)\Vert<\varepsilon$ for all $x\in K$. Note that this property obviously passes through to preduals.

Given a metric space $M$ with a designated origin $0$ and a Banach space $X$, we will denote by $Lip(M,X)$ the Banach space of all $X$-valued Lipschitz functions on $M$ which vanish at $0$ under the standard Lipschitz norm 
$$\Vert f\Vert:=\sup\left\{ \frac{\Vert f(x)-f(y)\Vert}{d(x,y)}\ :\ x,y\in M, x\neq y \right\} .$$
First of all, note that we can consider any point of $M$ as an origin with no loss of generality, because the resulting Banach spaces turn out to be isometrically isomorphic. Moreover, it is known that $Lip(M,X^*)$ is a dual Banach space, with a canonical predual given by
$$\mathcal F(M,X):=\overline{span}\{\delta_{m,x}: m\in M, x\in X\}\subseteq Lip(M,X^*)^*,$$
where $\delta_{m,x}(f):=f(m)(x)$ for every $m\in M, x\in X$ and $f\in Lip(M,X^*)$ (see \cite{blr}). Furthermore, it is known that, for every metric space $M$ and every Banach space $X$, $Lip(M,X)=L(\mathcal F(M),X)$ (e.g.~\cite{jsv}) and that $\mathcal F(M,X)=\mathcal F(M)\pten X$ (see \cite[Proposition 1.1]{blr}).

We will also consider the following spaces of vector-valued Lipschitz functions.  
\begin{align*}
lip(M,X)&:=\left\{f\in Lip(M,X): \exists\ \lim\limits_{\varepsilon\rightarrow 0} \sup\limits_{0<d(x,y)<\varepsilon} \frac{\Vert f(x)-f(y)\Vert}{d(x,y)}=0\right\},\\
S(M,X)&:=\left\{ f\in lip(M,X) : \exists\ \lim\limits_{r\rightarrow \infty} \sup_{\stackrel{x \text{ or } y\notin B(0,r)}{x\neq y}} \frac{\Vert f(x)-f(y)\Vert}{d(x,y)}=0\right\}.
\end{align*}
We will avoid the reference to the Banach space when it is $\mathbb R$ in the above definitions. Finally, we will say that a subspace $S\subset Lip(M)$ \emph{separates points uniformly} if there exists a constant $c\geq 1$ such that for every $x, y\in M$ there is $f\in S$ satisfying $||f||\leq c$ and $f(x)-f(y)=d(x,y)$. Recall that if $M$ is a proper metric space then $S(M)$ separates points uniformly if, and only if, it is a predual of $\mathcal F(M)$ \cite{dal2}.

\section{Duality results on vector-valued Lipschitz-free Banach spaces} \label{duality}
\bigskip

Let $(M,d)$ be a metric space, $X$ be a Banach space and assume that there exists a subspace $S$ of $Lip_0(M)$ such that $S^* = \F(M)$. Note that basic tensor theory yields the following identification:
$$ \mathcal F(M,X^*) = \mathcal F(M) \widehat\otimes_\pi X^* = (S \iten X)^{*}$$
whenever either $\mathcal F(M)$ or $X^*$ has (AP) and either $\mathcal F(M)$ or $X^*$ has the Radon-Nikod\'ym property (RNP)(see \cite[Theorem 5.33]{rya})
However, the natural question here is when we can give a representation of a predual of $\mathcal F(M,X)$ as a subspace of $Lip(M,X^{*})$.

It has been recently proved in \cite[Theorem 5.2]{gr} that $S(M,X)$ is isometrically isomorphic to $K_{w^*,w}(X^*,S(M))$ whenever $M$ is proper. Consequently, in order to prove that $S(M,X)^*=\mathcal F(M,X^*)=\mathcal F(M)\pten X^*$ under natural assumptions on $M$, we shall begin by analysing when the equality $K_{w^*,w}(X^*,Y)=X\iten Y$ holds. In order to do that, we shall need to introduce two results.

\begin{lemma} 
Let $X, Y$ be Banach spaces. Then $T\mapsto T^*$ defines an isometry from $K_{w^*,w}(X^*, Y)$ onto $K_{w^*,w}(Y^*, X)$.
\end{lemma}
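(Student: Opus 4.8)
The plan is to show that the adjoint map $T \mapsto T^*$ is a well-defined isometric bijection between the two spaces. The key observations are that the ordinary adjoint operation $L(X^*,Y) \to L(Y^*, X^{**})$ is always a linear isometry, and that on the relevant subspaces the image lands precisely where we want. I would proceed in four steps: first verify that $T^*$ actually maps $Y^*$ into $X$ (and not merely into $X^{**}$) with the correct weak$^*$-weak continuity; second, check that $T^*$ is compact; third, confirm the map is an isometry; and fourth, observe it is onto by a symmetry/involution argument.

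For the first step, suppose $T \in K_{w^*,w}(X^*, Y)$. The adjoint $T^*$ is defined on $Y^*$ by $\langle T^* y^*, x^* \rangle = \langle y^*, T x^* \rangle$, and a priori $T^* y^* \in X^{**}$. The crucial point is that the hypothesis that $T$ is $w^*$-$w$ continuous forces $T^*$ to factor through the canonical embedding, i.e.\ $T^* y^*$ lies in $X \subseteq X^{**}$. \textbf{This is the main obstacle and the heart of the lemma.} Here is how I expect to argue it: $T$ being $w^*$-to-$w$ continuous means $T$ is the adjoint of some operator $R: Y^* \to X$ (an operator between Banach spaces is $w^*$-$w$ continuous from a dual space precisely when it is the adjoint of an operator into the predual). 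Concretely, one shows that $x^* \mapsto \langle y^*, Tx^*\rangle$ is $w^*$-continuous on $X^*$ for each fixed $y^*$, hence equals evaluation at some element of $X$; this element is exactly $T^* y^* \in X$. Thus $T^*$ is a genuine operator from $Y^*$ to $X$, and its own adjoint recovers $T$, confirming that $T^*$ is itself $w^*$-$w$ continuous by the same factorization criterion applied in reverse.

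For the second step, I would invoke Schauder's theorem: an operator is compact if and only if its adjoint is compact. Since $T \in K_{w^*,w}(X^*,Y) \subseteq K(X^*, Y)$ is compact, its adjoint (in the ordinary sense) is compact; restricting attention to the corestriction $T^*: Y^* \to X$ constructed above, compactness is preserved. Hence $T^* \in K_{w^*,w}(Y^*, X)$. For the third step, isometry is immediate from the classical identity $\|T^*\| = \|T\|$ valid for the adjoint of any bounded operator between Banach spaces, which descends unchanged to these subspaces.

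For the final step, surjectivity follows from the involutive nature of the construction. Given $S \in K_{w^*,w}(Y^*, X)$, the same argument produces $S^* \in K_{w^*,w}(X^*, Y)$ (replacing $Y$ by $X$ and $X$ by $Y$ throughout), and by the biduality inherent in the factorization one checks that $(S^*)^* = S$. Therefore $T \mapsto T^*$ has a two-sided inverse of the same form, so it is a bijection. Combining the four steps yields that $T \mapsto T^*$ is a surjective linear isometry from $K_{w^*,w}(X^*, Y)$ onto $K_{w^*,w}(Y^*, X)$, completing the proof.
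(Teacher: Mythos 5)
Your proposal is correct and follows essentially the same route as the paper: the heart in both cases is that $T^*(y^*)=y^*\circ T$ is a weak-star continuous functional on $X^*$ and hence lies in $X$, with compactness handled by Schauder and surjectivity by the involutive symmetry $R^{**}=R$. The only cosmetic difference is that the paper justifies the $w^*$-$w$ continuity of $T^*$ by noting that adjoints are always $\sigma(Y^*,Y)$-$\sigma(X^{**},X^*)$ continuous (which restricts to the weak topology on the range $X$), whereas you invoke the factorization criterion in reverse; these are equivalent.
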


\begin{proof} Let $T\in K(X^*, Y)\cap L_{w^*,w}(X^*,Y)$. Then $T^*\in K(Y^*, X^{**})$. Moreover, given $y^* \in Y^*$, we have that $T^*(y^*)=y^*\circ T: X^*\to\real$ is weak-star continuous and thus $T^*(y^*)\in X$. Therefore $T^*\in K(Y^*, X)$. Since $T^*$ is $\sigma(Y^*, Y)-\sigma(X^{**}, X^*)$-continuous, we get $T^*\in L_{w^*,w}(Y^*,X)$. 
Conversely, if $R\in K(Y^*, X)\cap L_{w^*,w}(Y^*,X)$ then $R^*\in K(X^*, Y)\cap L_{w^*,w}(X^*,Y)$ and $R^{**} = R$.    
\end{proof}

Next proposition is well-known (see Remark 1.2 in \cite{ruessstegal}), although we have not found any proof in the literature. We include it here for the sake of completeness. 

\begin{proposition}\label{tensor}
Let $X$ and $Y$ be Banach spaces and assume that either $X$ or $Y$ has (AP). Then $K_{w^*,w}(X^*, Y)= X\iten Y$.
\end{proposition}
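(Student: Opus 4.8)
The plan is to realise $X\iten Y$ as a concrete subspace of $L(X^*,Y)$ and then prove the two inclusions separately. The bilinear map $(x,y)\mapsto\big(x^*\mapsto x^*(x)\,y\big)$ extends to a linear embedding of $X\otimes Y$ into $L(X^*,Y)$ under which the injective norm is exactly the operator norm, and the resulting finite-rank operators are $w^*$-$w$ continuous; so $X\iten Y$, being the operator-norm closure of $X\otimes Y$, embeds into $L(X^*,Y)$. I would use throughout the standard characterisation that a bounded $T\colon X^*\to Y$ is $w^*$-$w$ continuous if and only if $T^*(Y^*)\subseteq X$ (identifying $X\subseteq X^{**}$). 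This immediately makes $K_{w^*,w}(X^*,Y)$ norm-closed: the compact operators are closed, and if $T_n\to T$ in operator norm with $T_n^*(Y^*)\subseteq X$, then $T_n^*y^*\to T^*y^*$ in norm for each $y^*$, so $T^*y^*\in X$ since $X$ is closed. Hence the inclusion $X\iten Y\subseteq K_{w^*,w}(X^*,Y)$ is clear, and the real content is the reverse inclusion.

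For the reverse inclusion I would first assume that $Y$ has (AP). Given $T\in K_{w^*,w}(X^*,Y)$ and $\varepsilon>0$, compactness of $T$ makes $\overline{T(B_{X^*})}$ a compact subset of $Y$, so (AP) provides a finite-rank $S=\sum_{j=1}^m y_j^*\otimes z_j\in L(Y,Y)$ with $\|Sy-y\|<\varepsilon$ on this set, whence $\|S\circ T-T\|\le\varepsilon$. Writing $S\circ T=\sum_{j=1}^m (T^*y_j^*)\otimes z_j$, the decisive point is that $w^*$-$w$ continuity gives $T^*y_j^*\in X$, so that $S\circ T\in X\otimes Y$. Letting $\varepsilon\to0$ and using that $X\iten Y$ is closed yields $T\in X\iten Y$.

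The case where $X$ has (AP) then follows by duality. By the preceding Lemma, $T\mapsto T^*$ is an isometry of $K_{w^*,w}(X^*,Y)$ onto $K_{w^*,w}(Y^*,X)$ sending the elementary tensor $x\otimes y$ to $y\otimes x$; applying the case already settled (now with the (AP) space $X$ in the second slot) gives $T^*\in K_{w^*,w}(Y^*,X)=Y\iten X$, and taking adjoints back transports this to $T\in X\iten Y$, using that the injective tensor product is symmetric. The main obstacle is precisely to keep the finite-rank approximants inside $X\otimes Y$ rather than $X^{**}\otimes Y$: (AP) is what produces a finite-rank approximation of the compact operator $T$, while the $w^*$-$w$ continuity, through the condition $T^*(Y^*)\subseteq X$, is exactly what pulls the associated functionals $T^*y_j^*$ back into $X$. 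Both hypotheses are thus used in an essential way.
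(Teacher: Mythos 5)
Your proof is correct and takes essentially the same route as the paper: reduce via the adjoint isometry of the preceding Lemma to the case where $Y$ has (AP), then approximate the compact operator $T$ in norm by $R\circ T$ with $R$ a finite-rank operator on $Y$. Your way of seeing that the approximant lies in $X\otimes Y$ --- writing $R\circ T=\sum_j (T^*y_j^*)\otimes z_j$ and noting $T^*y_j^*\in X$ by $w^*$-$w$ continuity --- is a slightly more direct bookkeeping than the paper's argument via biorthogonal functionals applied to a generic representation $\sum_i x_i^{**}\otimes y_i$, but the substance is identical.
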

\begin{proof} 
By the above lemma we may assume that $Y$ has the (AP).  Clearly the inclusion $\supseteq$ holds, so let us prove the reverse one. To this aim pick $T:X^*\longrightarrow Y$ a compact operator which is $w^*-w$ continuous. We will approximate $T$ in norm by a finite-rank operator following word by word the proof of \cite[Proposition 4.12]{rya}. As $Y$ has the (AP) we can find $R:Y\longrightarrow Y$ a finite-rank operator such that $\Vert x-R(x)\Vert<\varepsilon$ for every $x\in T(B_{X^*})$, and define $S:=R\circ T$. $S$ is clearly a finite-rank operator such that $\Vert S-T\Vert<\varepsilon$. As $S$ is a finite rank operator, then $S=\sum_{i=1}^n x_i^{**}\otimes y_i$ for suitable $n\in \mathbb N, x_i^{**}\in X^{**}$ and $y_i\in Y$. Moreover $S$ is $w^*-w$ continuous. Indeed, the fact that $S$ is $w^*-w$ continuous means that, for every $y^*\in Y^*$, one has
$$y^*\circ S=\sum_{i=1}^n y^*(y_i)x_i^{**}:X^*\longrightarrow \mathbb R$$
is a weak-star continuous functional, so $\sum_{i=1}^n y^*(y_i) x_i^{**}\in X$ for each $y^*\in Y^*$. Note that an easy argument of bilinearity allows us to assume that $\{y_1,\ldots, y_n\}$ are linearly independent. Now, a straightforward application of Hahn-Banach theorem yields that, for every $i\in\{1,\ldots, n\}$, there exists $y_i^*\in Y^*$ such that $y_j^*(y_i)=\delta_{ij}$. Therefore, for every $j\in\{1,\ldots, n\}$, one has
$$X\ni y_j^*\circ S=\sum_{i=1}^n y_j^*(y_i)x_i^{**}=\sum_{i=1}^n \delta_{ij}x_i^{**}=x_j^{**}.$$
Consequently we get that $S\in X\otimes Y$. Summarising, we have proved that each element of $K_{w^*,w}(X^*, Y)$ can be approximated in norm by an element of $X\otimes Y$, so 
$$K_{w^*,w}(X^*, Y)=X\iten Y$$
and we are done. \end{proof}

As a consequence of Proposition \ref{tensor} and \cite[Theorem 5.2]{gr} we get the following.

\begin{corollary}
Let $M$ be a proper pointed metric space. If either $S(M)$ or $X$ has (AP), then $S(M,X)$ is linearly isometrically isomorphic to $S(M) \widehat{\otimes}_\varepsilon X$.
\end{corollary}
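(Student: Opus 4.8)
The plan is simply to chain together the two identifications already at our disposal. First I would invoke \cite[Theorem 5.2]{gr}, which, since $M$ is proper, furnishes an isometric isomorphism $S(M,X)\cong K_{w^*,w}(X^*,S(M))$. This immediately reduces the corollary to identifying the latter space of operators with an injective tensor product, and it is here that the AP hypothesis will enter.

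For this second step I would apply Proposition \ref{tensor} with the target space taken to be $Y=S(M)$. The hypothesis of that proposition---that either $X$ or $Y$ has (AP)---becomes precisely the assumption that either $X$ or $S(M)$ has (AP), which is exactly what we are granting. Thus Proposition \ref{tensor} yields the isometric equality $K_{w^*,w}(X^*,S(M))=X\iten S(M)$. Finally, since the injective tensor product is symmetric, $X\iten S(M)=S(M)\iten X$, and composing the three isometries gives $S(M,X)\cong S(M)\iten X$, as desired.

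There is no genuine obstacle here beyond bookkeeping: the substantive content lives in Proposition \ref{tensor} and in the cited theorem of \cite{gr}, and the corollary merely records their conjunction. The only two points meriting a moment's care are, first, to confirm that each identification in the chain is genuinely an \emph{isometry} (not merely an isomorphism), so that the composed map is itself isometric; and second, to check that the AP hypotheses align verbatim---both of which are immediate from the statements as written.
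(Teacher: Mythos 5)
Your argument is exactly the paper's: the corollary is stated there as an immediate consequence of \cite[Theorem 5.2]{gr} (giving $S(M,X)\cong K_{w^*,w}(X^*,S(M))$ for proper $M$) combined with Proposition \ref{tensor} applied with $Y=S(M)$. Both identifications are isometric and the AP hypotheses match, so your chain of isometries is correct and complete.
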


Above corollary as well as basic theory of tensor product spaces give us the key for proving our first duality result in the vector-valued setting.

\begin{theorem}\label{predulibrevectorproper}
Let $M$ be a proper pointed metric space and let $X$ be a Banach one. Assume that $S(M)$ separates points uniformly. If either $\mathcal F(M)$ or $X^*$ has (AP), then
$$S(M,X)^*=\mathcal F(M,X^*).$$
\end{theorem}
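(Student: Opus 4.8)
The plan is to deduce the statement from the injective--projective duality recorded at the opening of this section, applied to the concrete subspace $S=S(M)$, after checking that its two standing hypotheses — the approximation property and the Radon-Nikod\'ym property — are automatically in force under the assumptions of the theorem. Concretely, I would (i) identify $S(M,X)$ with an injective tensor product, (ii) verify (AP) and (RNP) for the relevant factor, and then (iii) invoke \cite[Theorem 5.33]{rya} together with $\mathcal F(M)\pten X^*=\mathcal F(M,X^*)$.

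First I would record that, since $M$ is proper and $S(M)$ separates points uniformly, the characterisation from \cite{dal2} quoted in the Notation gives $S(M)^*=\mathcal F(M)$; hence $S=S(M)$ is a legitimate choice in the identification $\mathcal F(M,X^*)=\mathcal F(M)\pten X^*=(S\iten X)^*$. Next, to bring in the Corollary above (which requires $S(M)$ or $X$ to have (AP)), I would observe that we are assuming (AP) for $\mathcal F(M)=S(M)^*$ or for $X^*$; since (AP) passes to preduals, this transfers to $S(M)$ or to $X$, so the Corollary applies and $S(M,X)=S(M)\iten X$ isometrically.

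The hard part is the Radon-Nikod\'ym property, which is absent from the hypotheses and so must be produced for free. Here I would exploit properness: a proper metric space is $\sigma$-compact, hence separable, so $\mathcal F(M)$ is separable; being moreover a dual space, $\mathcal F(M)=S(M)^*$ is a \emph{separable dual} and therefore has the (RNP). This is precisely the observation that lets one drop the Radon-Nikod\'ym requirement from the general tensor duality and retain only the approximation property in the statement.

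With $\mathcal F(M)=S(M)^*$ having (RNP) and either $\mathcal F(M)$ or $X^*$ having (AP), the duality $(S(M)\iten X)^*=\mathcal F(M)\pten X^*$ of \cite[Theorem 5.33]{rya} becomes available. Chaining the three isometric identifications $S(M,X)=S(M)\iten X$, this duality, and $\mathcal F(M)\pten X^*=\mathcal F(M,X^*)$ then yields $S(M,X)^*=\mathcal F(M,X^*)$. The only points needing care are the correct direction of the (AP)-transfer between a dual space and its predual, and citing the standard fact that separable dual spaces have the (RNP); beyond these, the argument is bookkeeping of the isometric identifications.
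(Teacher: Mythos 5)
Your proposal is correct and follows essentially the same route as the paper: identify $S(M,X)$ with $S(M)\iten X$ via the preceding corollary (using that (AP) passes to preduals), supply the Radon--Nikod\'ym property for $\mathcal F(M)=S(M)^*$, and conclude by \cite[Theorem 5.33]{rya}. The paper phrases the (RNP) step by noting that $S(M)$ is an Asplund space, while you observe directly that $\mathcal F(M)$ is a separable dual; these are the same observation.
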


\begin{proof}
As $S(M)$ separates points uniformly, $S(M)^*=\mathcal F(M)$ \cite{dal2}. Thus $S(M)$ is an Asplund space. Consequently, we get from above corollary and from \cite[Theorem 5.33]{rya} that 
$$S(M,X)^*=(S(M) \iten X)^*=\mathcal F(M)\widehat{\otimes}_{\pi}X^*=\mathcal F(M,X^*),$$
so we are done.
\end{proof}

Next proposition enlarges the class of metric spaces in which above theorem applies. 

\begin{proposition} \label{holderseppoints}
Let $M$ be a proper metric space. If $(M,\omega \circ d)$ is a proper H\"{o}lder metric space where $\omega$ is a non trivial gauge, then $S(M)$ separates points uniformly. 
\end{proposition}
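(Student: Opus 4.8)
Throughout I read $S(M)$ as the space associated with the Hölder metric $\rho:=\omega\circ d$ (the metric of the proper Hölder space in the hypothesis), while $d$ denotes the underlying proper metric on $M$; accordingly $lip(M)$, $S(M)$ and the Lipschitz norm $\|\cdot\|$ are all computed with respect to $\rho$. The plan is to exhibit explicit separating peak functions, exploiting the decisive structural feature of the Hölder metric: because $\omega$ is a \emph{non-trivial} gauge, every function that is Lipschitz for the underlying metric $d$ automatically belongs to $lip(M)$. Indeed, if $h$ is $L$-Lipschitz for $d$, then
\[ \frac{|h(a)-h(b)|}{\rho(a,b)}\le L\,\frac{d(a,b)}{\omega(d(a,b))}=\frac{L}{\omega(d(a,b))/d(a,b)}\xrightarrow[\;d(a,b)\to0\;]{}0 \]
since $\omega(t)/t\to\infty$ as $t\to0$. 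Hence it is enough to find, for each pair $x\ne y$, a $d$-Lipschitz function of bounded support realising the required separation, and to bound its norm $\|\cdot\|$ uniformly.

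Concretely, given $x\ne y$ I would set $r:=d(x,y)$ and define
\[ g(z):=\frac{\omega(r)}{r}\,\max\{\,r-d(z,x),\,0\,\}. \]
The elementary checks come first: $g$ is $d$-Lipschitz with constant $\omega(r)/r$; it is supported in the $d$-ball $\overline{B}(x,r)$, so by properness it is flat at infinity and thus lies in $S(M)$ once the little-Lipschitz property is in hand; and it separates the pair, since $g(x)=\omega(r)=\rho(x,y)$ whereas $g(y)=0$, giving $g(x)-g(y)=\rho(x,y)$. The little-Lipschitz membership is immediate from the displayed estimate, so the whole argument comes down to the uniform norm bound.

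For that bound I would estimate $\|g\|=\sup_{a\ne b}|g(a)-g(b)|/\omega(d(a,b))$ by cases on $s:=d(a,b)$, using the two crude bounds $|g(a)-g(b)|\le(\omega(r)/r)\,s$ and $|g(a)-g(b)|\le\omega(r)$ (the oscillation of $g$). When $s\ge r$ the ratio is at most $\omega(r)/\omega(s)\le1$ by monotonicity of $\omega$; when $s<r$ it reduces to controlling $\frac{\omega(r)/r}{\omega(s)/s}$, and this is exactly where \emph{subadditivity} of the gauge is used: with $n=\lfloor r/s\rfloor\ge1$ one gets $\omega(r)\le(n+1)\omega(s)\le2(r/s)\,\omega(s)$, whence $\frac{\omega(r)}{r}\cdot\frac{s}{\omega(s)}\le2$. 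Thus $\|g\|\le2$ with a constant independent of $x$ and $y$, and $S(M)$ separates points uniformly with $c=2$.

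I expect the main obstacle to be precisely this uniform control of the norm. A naive peak function has $d$-slope $\omega(r)/r$, which blows up as $r\to0$, and it is only the combination of the vanishing of $d(a,b)/\omega(d(a,b))$ at small scales (non-triviality) with the quasi-monotonicity of $t\mapsto\omega(t)/t$ forced by subadditivity that keeps $\|g\|$ bounded by an absolute constant; the separation identity and the flat-at-infinity behaviour are routine by comparison.
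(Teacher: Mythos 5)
Your proof is correct, and it takes a genuinely different route from the paper's. Both arguments hinge on the same key observation — non-triviality of the gauge makes every $d$-Lipschitz function little-Lipschitz for $\omega\circ d$ — but they realize the separating functions differently. The paper regularizes the gauge by $\omega_n(t)=\inf\{\omega(s)+n(t-s):0\le s\le t\}$ and composes the distance-to-$y$ function (in the metric $d_{\omega_n}$) with a plateau function $\varphi$, obtaining $\|h_n\|_{Lip_\omega}\le 1$ but only the approximate separation $|h_n(x)-h_n(y)|\ge d_\omega(x,y)-\varepsilon$, plus an explicit case analysis to verify the flat-at-infinity condition. You instead write down a single tent function $g(z)=\frac{\omega(r)}{r}\max\{r-d(z,x),0\}$ and control its $\omega\circ d$-Lipschitz norm by $2$ via the quasi-monotonicity of $t\mapsto\omega(t)/t$ forced by subadditivity ($\omega(r)\le 2(r/s)\omega(s)$ for $s<r$); this buys exact separation $g(x)-g(y)=\omega(d(x,y))$, which is literally the form required by the paper's definition of uniform separation, at the harmless cost of the constant $2$ in place of $1$, and it avoids the auxiliary gauges and the limit in $n$ entirely. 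Two points you should still write out, both routine: subtract the constant $g(0)$ so the function vanishes at the basepoint (the paper does the analogous normalization with $\varphi(d_{\omega_n}(0,y))$), and spell out the $S(M)$ membership — for $a$ outside a large ball and $b$ in the support one has $|g(a)-g(b)|\le\omega(r)$ while $\omega(d(a,b))\to\infty$, which is exactly the paper's second case.
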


\begin{proof}
 We will adapt the technique done in \cite[Proposition 3.5]{kalton2004} for the compact case. We will show that, for every $x \neq y \in M$ and every $\ep>0$, there exists $f \in lip(M,d_{\omega})$ such that $|f(x)-f(y)| \geq \dw(x,y)-\ep$ and $\|f\|_{Lip_{\omega}} \leq 1$ (where $\dw$ denotes $\omega \circ d$). 
%\begin{tikzpicture}
%  \draw[->] (0,0) -- (15,0) node[right] {$x$};
%  \draw[->] (0,0) -- (0,6) node[above] {$y$};
%  \draw[scale=1,domain=0:5,smooth,variable=\x,blue] plot ({\x},{\x});
%  \draw[scale=1,domain=5:7,smooth,variable=\x,blue] plot ({\x},{5});
%  \draw[scale=1,domain=7:12,smooth,variable=\x,blue] plot ({\x},{12-\x});
%  \draw[scale=1,domain=12:14,smooth,variable=\x,blue] plot ({\x},{0});
%  \draw[dashed] (5,0) -- (5,5);
%  \draw[dashed] (7,0) -- (7,5);
%  \draw[dotted] (0,5) -- (5,5);
%  \node at (0,-0.2) {0};
%  \node at (5,-0.2) {a-$\ep$};
%  \node at (7,-0.2) {a+$\ep$};
%  \node at (12,-0.2) {2a};
%  \node at (-0.5,5) {a-$\ep$};
%\end{tikzpicture}
Fix $\ep >0$ and let $x \neq y \in M$. We denote $a = \dw(x,y)$ and define  $\varphi$: $[0,+\infty[ \to [0,+\infty[$ by the equation:
$$\varphi(t) = \left\{
    \begin{array}{llll}
        t & \mbox{ if } 0 \leq t < a- \ep, \\
        a- \ep & \mbox{ if } a-\ep \leq t < a+ \ep, \\
        -t + 2a & \mbox{ if } a+\ep \leq t < 2a, \\
        0 & \mbox{ if } 2a \leq t. \\
    \end{array}
\right.
$$
Notice that $\|\varphi\|_{Lip} \leq 1$. For every $n \in \mathbb N$ we define a new gauge
$$ \om_n(t) = \inf\{ \om(s) + n (t-s) \, ; \, 0 \leq s \leq t \}.  $$
Note that, for every $t \in [0,+\infty[$, one has that $\om_n(t) \underset{n\to+\infty}{\longrightarrow} \om(t)$.  Finally, for $n \in \mathbb N$,  we consider $h_n$ defined on $M$ by $h_n(z)= \varphi(d_{\om_n}(z,y)) - \varphi(d_{\om_n}(0,y))$. It is straightforward to check that, for $n$ large enough, $|h_n(x)-h_n(y)| = a - \ep = d_{\om}(x,y) - \ep$.  Moreover, given $z$ and $z'$ in $M$, straightforward computations yields the following
\begin{eqnarray*}
|h_n(z)-h_n(z')| &=& |\varphi(d_{\om_n}(z,y)) - \varphi(d_{\om_n}(z',y))| \\
&\leq & \|\varphi \|_{Lip} |d_{\om_n}(z,y) -d_{\om_n}(z',y)| \\
&\leq & d_{\om_n}(z,z').
\end{eqnarray*}
Furthermore, from the definition of $\om_n$, it follows
$$d_{\om_n}(z,z')\leq d_{\om}(z,z')\mbox{ and } d_{\om_n}(z,z')\leq  nd(z,z').$$
Now the first of above inequalities shows that $\|h_n\|_{Lip} \leq 1$  while the second one proves that $h_n \in Lip(M,d) \subset lip(M,d_{\omega})$. It remains to prove that $h_n \in S(M)$. To this aim, fix $\eta >0$, and pick $r>2a+d_{\omega}(0,y)$ such that $\displaystyle{\frac{a}{r-2a-d_{\omega}(0,y)} \leq \eta}$. Now let $z$ and $z'$ be in $M$, and let us discuss by cases:
\begin{itemize}
\item If $z$ and $z'$ are not in $\overline{B}(0,r)$, then $|h_n(z)-h_n(z')|=0 < \eta$.
\item Now suppose that $z \not \in B(0,r)$ and $z' \in B(0,r)$. Now we can still distinguish two more cases:
\begin{itemize}
\item First assume that $d_{\omega}(z',y) \geq 2a$. Then $h_n(z)=h_n(z')=0$ and so $|h_n(z)-h_n(z')|< \eta$ again trivially holds.
\item On the other hand, if $d_{\omega}(z',y) < 2a$, then $|h_n(z')| \leq a$ and so
\begin{eqnarray*}
\frac{|h_n(z) - h_n(z')|}{d_{\om}(z,z')} &\leq& \frac{a}{\dw(z,0)-\dw(z',y)-\dw(0,y)} \\
&\leq& \frac{a}{r-2a-\dw(0,y)} \leq \eta.
\end{eqnarray*}
\end{itemize}
\end{itemize}
This proves that $h_n\in S(M)$ and concludes the proof.\end{proof}

Now we will exhibit some examples of metric and Banach spaces in which Theorem \ref{predulibrevectorproper} applies.

\begin{corollary}\label{coroapcasovectorproper}
Let $M$ be a proper metric space and $X$ be a Banach space. Then $S(M,X)^*=\mathcal F(M,X^*)$ whenever $M$ and $X$ satisfy one of the following assumptions:
\begin{enumerate}
\item $M$ is countable.
\item $M$ is ultrametric.
\item $(M,\omega \circ d)$ is a H\"{o}lder metric space where $\omega$ is a non trivial gauge, and either $\mathcal F(M)$ or $X^*$ has (AP).
\item $M$ is the middle third Cantor set.
\end{enumerate}
\end{corollary}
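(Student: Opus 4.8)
The plan is to treat each of the four items as a direct application of Theorem \ref{predulibrevectorproper}. Since $M$ is assumed proper throughout, the only hypotheses left to verify are that $S(M)$ separates points uniformly and that the approximation property holds on $\mathcal{F}(M)$ or on $X^*$. In case (3) the latter is part of the assumptions and the former is exactly the conclusion of Proposition \ref{holderseppoints}, so that item requires no further work. The real content of the corollary lies in producing, for items (1), (2) and (4), \emph{both} ingredients for free, i.e.\ without assuming (AP) on either factor.

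For the separation of points uniformly I would invoke Dalet's work: when $M$ is countable and proper, or proper and ultrametric, it is known that $S(M)^*=\mathcal{F}(M)$ \cite{dal1,dal2}, which by the characterisation recalled at the end of the Notation section is equivalent to $S(M)$ separating points uniformly. Thus (1) and (2) are settled on the separation side by citation. For the approximation property I would use that in exactly these two situations $\mathcal{F}(M)$ has the metric approximation property (again from the structural analysis in \cite{dal1,dal2}, where $\mathcal{F}(M)$ is realised as a dual space with a well-behaved predual); in particular (AP) holds, so the hypotheses of Theorem \ref{predulibrevectorproper} are met with no restriction on $X$.

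The remaining case (4) I would reduce to case (2) by a bi-Lipschitz change of metric. The middle third Cantor set $C$, with the metric inherited from $\mathbb{R}$, is bi-Lipschitz equivalent to the first-difference ultrametric $\rho$ on ternary sequences: if $x\neq y$ first disagree at index $k$, then $|x-y|$ and $3^{-k}=\rho(x,y)$ agree up to a factor $3$. Since $(C,\rho)$ is a proper ultrametric space, case (2) yields that $S(C,\rho)$ separates points uniformly, and I would transfer this to the Euclidean metric: given $x\neq y$, rescaling a witnessing function $g$ for $\rho$ by the factor $d(x,y)/\rho(x,y)$ produces $f$ with $f(x)-f(y)=d(x,y)$ and Lipschitz constant controlled by the bi-Lipschitz constants, with $f$ still lying in $S(C)$ because compactness makes the behaviour at infinity vacuous and the little-Lipschitz flatness is preserved under a bi-Lipschitz change of metric. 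For (AP) I would instead use that $C\subseteq\mathbb{R}$, so that $\mathcal{F}(C)$ embeds isometrically into $L_1$ and hence enjoys the metric approximation property; alternatively one may again quote the ultrametric structure.

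The main obstacle is not the separation of points, which is handled uniformly by \cite{dal1,dal2} together with Proposition \ref{holderseppoints}, but rather securing (AP) \emph{automatically} in items (1), (2) and (4). This is where one must lean on the finer structural descriptions of $\mathcal{F}(M)$ (its metric approximation property in the countable and ultrametric proper cases, and its embeddability in $L_1$ for subsets of the line), since without (AP) the tensor identification $(S(M)\iten X)^{*}=\mathcal{F}(M)\pten X^{*}$ underlying Theorem \ref{predulibrevectorproper} is unavailable. A secondary and more routine point is verifying that the property of separating points uniformly is stable under the bi-Lipschitz reduction used for the Cantor set.
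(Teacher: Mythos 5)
Your treatment of items (1), (2) and (3) is essentially the paper's own proof: cite Dalet for the uniform separation of points and for (AP) of $\mathcal F(M)$ in the countable and ultrametric proper cases, use Proposition \ref{holderseppoints} plus the assumed (AP) in case (3), and feed everything into Theorem \ref{predulibrevectorproper}. Where you diverge is item (4): the paper simply quotes \cite[Proposition 3.2.2]{weaver}, whereas you reduce the middle third Cantor set to case (2) via the bi-Lipschitz equivalence with the first-difference ultrametric on ternary sequences and then transfer the separation property by rescaling a witnessing function. That reduction is sound (the estimate $\rho\le d\le 3\rho$ is correct, the rescaled function stays in $S(C)$ since $C$ is compact and the little-Lipschitz condition is bi-Lipschitz invariant, and the Lipschitz constant is controlled by $3c$), and it has the merit of being self-contained rather than an external citation. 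One caveat: your first justification of (AP) for $\mathcal F(C)$ --- that it embeds isometrically into $L_1$ and \emph{hence} has the metric approximation property --- is a non sequitur as stated, since by Szankowski $L_1$ has closed subspaces failing (AP); the embedding of $\mathcal F(A)$, $A\subseteq\mathbb R$, into $L_1$ gives (AP) only because the image is a well-identified sublattice (for the Cantor set it is in fact isometric to a weighted $\ell_1$), which requires an extra argument. Your fallback justification is the right one and closes the gap: (AP) is an isomorphic invariant, bi-Lipschitz equivalent metrics yield isomorphic free spaces, and $\mathcal F(C,\rho)$ has (AP) by the proper ultrametric case, so no repair is actually needed.
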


\begin{proof}
If $M$ satisfies either (1) or (2), then $S(M)$ separates points uniformly and $\mathcal F(M)$ has the approximation property \cite{dal2}. Thus Theorem \ref{predulibrevectorproper} applies.  Moreover, if $M$ satisfies (3) then Proposition \ref{holderseppoints} does the work. Finally, \cite[Proposition 3.2.2]{weaver} yields (4).
\end{proof}

Throughout the rest of the section we will consider a bounded metric space $(M,d)$ and a topology $\tau$ on $M$ such that $(M,\tau)$ is compact and $d$ is $\tau$-lower semicontinuous. We will consider 
\[ lip_\tau(M)=lip(M)\cap \mathcal C(M,\tau),\]
the space of little-Lipschitz functions which are $\tau$-continuous on $M$. Since $M$ is bounded, $lip_\tau(M)$ is a closed subspace of $lip(M)$ and thus it is a Banach space. Moreover, Kalton proved in \cite[Theorem 6.2]{kalton2004} that $lip_\tau(M)^*=\mathcal F(M)$ whenever $M$ is separable and complete and the following condition holds:
\begin{equation}\label{sepaliptau}\tag{P} \forall x,y\in M \ \forall \varepsilon>0 \ \exists f\in B_{lip_\tau(M)} :  |f(x)-f(y)|\geq d(x,y)-\varepsilon.\end{equation}
Recall that above condition holds if, and only if, $lip_\tau(M)$ is $1$-norming for $\mathcal F(M)$ \cite[Proposition 3.4]{kalton2004}. 

Now we can wonder whether there is a natural extension of this result to the vector-valued case. We will prove, following similar ideas to the ones of \cite[Section 5]{gr}, that under suitable assumptions the space
\[ lip_\tau(M,X):= lip(M,X)\cap \{f\colon M\to X : f \text{ is } \tau-||\cdot|| \text{ continuous}\}\]
is a predual of $\mathcal F(M,X^*)$. 

For this, we shall begin by characterising relative compactness in $lip_\tau(M)$.

\begin{lemma}\label{caracompareltau} Let $(M,d)$ be a metric space of radius $R$ and $\tau$ a topology on $M$ such that $(M,\tau)$ is compact and $d$ is $\tau$-lower semicontinuous. Let $\mathcal F$ be a subset of $lip_\tau(M)$. Then $\mathcal F$ is relatively compact in $lip_\tau(M)$ if, and only if, the following three conditions hold:
\begin{enumerate}
\item \label{caracomp1} $\mathcal F$ is bounded.
\item \label{caracomp2}$\mathcal F$ satisfies the following uniform little-Lipschitz condition: for every $\varepsilon>0$ there exists a positive $\delta>0$ such that 
\[ \sup\limits_{0<d(x,y)<\delta}\frac{\vert f(x)-f(y)\vert}{d(x,y)}<\varepsilon\] 
for every $f\in \mathcal F$.
\item \label{caracomp3}$\mathcal F$ is equicontinuous in $\mathcal C((M,\tau))$, i.e. for every $x\in M$ and every $\varepsilon>0$ there exists $U$ a $\tau$-neighbourhood of $x$ such that $y\in U$ implies $\sup\limits_{f\in\mathcal F}\vert f(x)-f(y)\vert<\varepsilon$.
\end{enumerate}\end{lemma}

\begin{proof} In \cite[Theorem 6.2]{kalton2004} it is proved that $lip_\tau(M)$ is isometrically isomorphic to a subspace of a space of continuous functions on a compact set. Indeed, let $K:=\{(x,y,t)\in (M,\tau)\times (M,\tau)\times [0,2R]: d(x,y)\leq t\}$. Then $K$ is compact by $\tau$-lower semicontinuity of $d$. Moreover, the map $\Phi\colon lip_\tau(M)\rightarrow \mathcal C(K)$ defined by 
$$\Phi (f)(x,y,t):=\left\{\begin{array}{cc}
\frac{f(x)-f(y)}{t} & t\neq 0,\\
0 & \mbox{otherwise.}
\end{array}\right.$$
is a linear isometry. Therefore, we have that $\mathcal F$ is relatively compact if, and only if, $\Phi(\mathcal F)$ is relatively compact. By Ascoli-Arzel\`a theorem we get that $\mathcal F$ is relatively compact if, and only if, $\Phi(\mathcal F)$ is bounded and equicontinuous in $\mathcal C(K)$. We will first assume that conditions (\ref{caracomp1}), (\ref{caracomp2}) and (\ref{caracomp3}) hold. It is clear that $\Phi(\mathcal F)$ is bounded, so let us prove equicontinuity of $\Phi(\mathcal F)$. To this aim pick $(x,y,t)\in K$. Now we have two possibilities:
\begin{enumerate}
\item[(i)] If $t\neq 0$ we can find positive number $\eta<t$ such that $t'\in ]t-\eta,t+\eta[$ implies $\left\vert\frac{1}{t}-\frac{1}{t'}\right\vert<\frac{\varepsilon}{4R\alpha}$, where $\alpha=\sup_{f\in\mathcal F}||f||$. Now, as $x$ and $y$ are two points of $M$ and $\mathcal F$ verifies condition (\ref{caracomp3}), we conclude the existence of $U$ a $\tau$-neighbourhood of $x$ and $V$ a $\tau$-neighbourhood of $y$ in $M$ verifying $x'\in U, y'\in V$ implies $\vert f(x)-f(x')\vert+\vert f(y)-f(y')\vert<\frac{\varepsilon t }{2}$ for every $f\in\mathcal F$. Now, given $(x',y',t')\in (U\times V\times ]t-\eta,t+\eta[)\cap K$, one has
$$\vert \Phi f(x,y,t)-\Phi f(x',y',t')\vert=\left\vert\frac{f(x)-f(y)}{t}-\frac{f(x')-f(y')}{t'} \right\vert\leq$$
$$\left\vert\frac{1}{t}-\frac{1}{t'} \right\vert |f(x')-f(y')|+\frac{1}{t}|f(x)-f(x')+f(y)-f(y')|$$
$$\leq\frac{\varepsilon}{4R\alpha} ||f||d(x',y')+\frac{\varepsilon t }{2t} \leq \frac{\varepsilon}{2}+\frac{\varepsilon}{2}=\varepsilon$$
for every $f\in\mathcal F$, which proves equicontinuity of $\Phi(f)$ at $(x,y,t)$.
\item[(ii)] 
If $t=0$ then $x=y$. Pick an arbitrary $\varepsilon>0$. By (\ref{caracomp2}) we get a positive $\delta$ such that $0<d(x,y)<\delta$ implies $\frac{\vert f(x)-f(y)\vert}{d(x,y)}<\varepsilon$ for every $f\in\mathcal F$. Now, given $(x',y',t)\in (M\times M\times [0,\delta[)\cap K$ we have $d(x',y')\leq t<\delta$ and so, given $f\in\mathcal F$, it follows
$$\vert \Phi f(x',y',t)\vert\leq \frac{\vert f(x')-f(y')\vert}{t}<\varepsilon\frac{d(x',y')}{t}\leq \varepsilon,$$
which proves equicontinuity at $(x,x,0)$. \end{enumerate}
Both previous cases prove that $\Phi(\mathcal F)$ is equicontinuous whenever conditions (\ref{caracomp1}), (\ref{caracomp2}) and (\ref{caracomp3}) are satisfied. 

Conversely, assume that $\Phi(\mathcal F)$ is equicontinuous in $\mathcal C(K)$. It is clear that $\mathcal F$ is bounded, so let us prove that conditions (\ref{caracomp2}) and (\ref{caracomp3}) are satisfied. We shall begin by proving (\ref{caracomp3}), for which we fix $x\in M$ and $\varepsilon>0$. Given $t\in[0,2R]$, by equicontinuity of $\Phi(\mathcal F)$ at the point $(x,x,t)$, we can find $U_t$ a $\tau$-neighbourhood of $x$ and $\eta_t>0$ such that $x'\in U_t$ and $t'\in ]t-\eta_t, t+\eta_t[$ implies $\vert \Phi f(x,x',t')\vert<\frac{\varepsilon}{2R}$ for every $f\in \mathcal F$. Then $[0,2R] \subset \bigcup_t ]t-\eta_t, t+\eta_t[$ and thus there exist $t_1,\ldots, t_n$ such that $[0,2R] \subset \bigcup_{i=1}^n ]t_i-\eta_{t_i}, t_i+\eta_{t_i}[$. Now take $U=\bigcap_{i=1
}^n U_{t_i}$. We will show that $U$ is the desired $\tau$-neighbourhood of $x$. Pick $x'\in U$. Then there exists $t_i$ such that $d(x,x')\in  ]t_i-\eta_{t_i}, t_i+\eta_{t_i}[$. Since $x'\in U_{t_i}$ we get
$$\vert \Phi f(x,x',d(x,x'))\vert = \left\vert \frac{f(x)-f(x')}{d(x,x')}\right\vert<\frac{\varepsilon}{2R}$$
and thus $|f(x)-f(x')|<\varepsilon$ for every $x'\in U$ and $f\in \mathcal F$. This proves that $\mathcal F$ is equicontinuous at every $x\in M$.

Finally, let us prove condition (\ref{caracomp2}). To this aim pick a positive $\varepsilon$. For every $x\in M$ we have, from equicontinuity of $\Phi(\mathcal F)$ at $(x,x,0)$, the existence of $U_x$ a $\tau$- open neighbourhood of $x$ in $M$ and a positive $\delta_x>0$ such that $x',y'\in U_x$ and $0<t<\delta_x$ implies $\vert \Phi f(x',y',t)\vert<\varepsilon$ for every $f\in\mathcal F$. 

As $M\times M=\bigcup\limits_{x\in M} U_x\times U_x$, we get by compactness the existence of $x_1,\ldots, x_n\in M$ such that $M\times M\subseteq \bigcup\limits_{i=1}^n U_{x_i}\times U_{x_i}$. Pick $\delta:=\min\limits_{1\leq i\leq n}\delta_{x_i}$. Now, if $x,y\in M$ verifies that $0<d(x,y)<\delta$ then there exists $i\in\{1,\ldots, n\}$ such that $x,y\in U_{x_i}$. As $d(x,y)<\delta\leq \delta_{x_i}$ we get 
$$\frac{\vert f(x)-f(y)\vert}{d(x,y)}=\vert \Phi f(x,y,d(x,y))\vert<\varepsilon$$
for every $f\in\mathcal F$, which proves (\ref{caracomp2}) and finishes the proof.\end{proof}

Previous lemma allows us to identity $lip_\tau(M,X)$ as a space of compact operators from $X^*$ to $lip_\tau(M)$.

\begin{theorem} \label{isomcompop}
Let $M$ be a pointed metric space and let $\tau$ be a topology on $M$ such that $(M,\tau)$ is compact and $d$ is $\tau$-lower semicontinuous. Then  $lip_\tau(M,X)$ is isometrically isomorphic to $K_{w^*,w}(X^*,lip_\tau(M))$. Moreover, if either $lip_\tau(M)$ or $X$ has (AP), then
$lip_\tau(M,X)$ is isometrically isomorphic to $lip_\tau(M) \widehat{\otimes}_\varepsilon X$.
\end{theorem}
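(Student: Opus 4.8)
The plan is to realise the isomorphism through the natural transpose correspondence $f \mapsto T_f$, where $T_f \colon X^* \to lip_\tau(M)$ is defined by $T_f(x^*) = x^* \circ f$, and to read off every required property of $T_f$ from the defining properties of $f$ by means of Lemma \ref{caracompareltau}. The ``moreover'' part will then be an immediate consequence of Proposition \ref{tensor}. First I would check that $\Psi(f) := T_f$ is a well-defined linear isometry into $L(X^*, lip_\tau(M))$: for fixed $x^*$ the function $x^* \circ f$ is Lipschitz with constant $\|x^*\|\,\|f\|$, it is little-Lipschitz because $|x^*(f(x)-f(y))| \leq \|x^*\|\,\|f(x)-f(y)\|$, and it is $\tau$-continuous as the composition of the $\tau$-to-norm continuous map $f$ with the norm-continuous functional $x^*$; hence $T_f(x^*) \in lip_\tau(M)$. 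The identity $\|f(x)-f(y)\| = \sup_{\|x^*\|\leq 1} |x^*(f(x))-x^*(f(y))|$ gives $\|T_f\| = \|f\|$, so $\Psi$ is isometric.

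Next I would prove $T_f \in K_{w^*,w}(X^*, lip_\tau(M))$. For compactness I would apply Lemma \ref{caracompareltau} to the family $T_f(B_{X^*}) = \{x^* \circ f : \|x^*\| \leq 1\}$: its boundedness, its uniform little-Lipschitz condition and its equicontinuity in $\mathcal C(M,\tau)$ follow respectively from $\|f\|<\infty$, from $f$ being little-Lipschitz, and from $f$ being $\tau$-to-norm continuous, each time using $|x^*(f(x)-f(y))| \leq \|f(x)-f(y)\|$ for $\|x^*\|\leq 1$. Thus $T_f(B_{X^*})$ is relatively norm-compact. For the $w^*$-$w$ continuity I would first observe that $T_f$ is $w^*$-to-pointwise continuous, since $x^*_\alpha \to x^*$ in $w^*$ forces $T_f(x^*_\alpha)(m) = x^*_\alpha(f(m)) \to x^*(f(m))$ for every $m$. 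On the norm-compact set $\overline{T_f(B_{X^*})}$ the topology of pointwise convergence and the weak topology coincide (both are Hausdorff and coarser than the norm topology, which is compact there), so $T_f|_{B_{X^*}}$ is $(w^*,w)$-continuous; a Krein--Smulian / Banach--Dieudonn\'{e} argument (continuity of each $\mu \circ T_f$ on $B_{X^*}$ for $\mu \in lip_\tau(M)^*$ upgrades to $w^*$-continuity on all of $X^*$, hence $\mu \circ T_f \in X$) then yields $T_f \in L_{w^*,w}(X^*, lip_\tau(M))$.

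Surjectivity is the reverse construction. Given $T \in K_{w^*,w}(X^*, lip_\tau(M))$, the functional $x^* \mapsto T(x^*)(m) = (\delta_m \circ T)(x^*)$ is $w^*$-continuous (as $\delta_m \in lip_\tau(M)^*$ is weakly continuous and $T$ is $(w^*,w)$-continuous), hence represented by a unique $f(m) \in X$ with $\langle x^*, f(m)\rangle = T(x^*)(m)$. That $f$ vanishes at the origin, is Lipschitz, little-Lipschitz and $\tau$-to-norm continuous again follows from $\|f(x)-f(y)\| = \sup_{\|x^*\|\leq 1}|T(x^*)(x)-T(x^*)(y)|$ together with the three conditions of Lemma \ref{caracompareltau} applied to the relatively compact set $T(B_{X^*})$. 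One then checks $T = T_f$, so $\Psi$ is onto. Finally, once $lip_\tau(M,X) = K_{w^*,w}(X^*, lip_\tau(M))$ is established, the second assertion is immediate: if $lip_\tau(M)$ or $X$ has (AP), Proposition \ref{tensor} gives $K_{w^*,w}(X^*, lip_\tau(M)) = X \iten lip_\tau(M)$, which is isometric to $lip_\tau(M) \iten X$ by the symmetry of the injective tensor product.

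I expect the main obstacle to be the $w^*$-$w$ continuity, specifically the two points of justifying that the weak and pointwise-convergence topologies agree on the norm-compact image $\overline{T_f(B_{X^*})}$ and of reducing global $w^*$-continuity to continuity on the ball $B_{X^*}$; everything else reduces, via the scalarisation inequality $|x^*(f(x)-f(y))| \leq \|f(x)-f(y)\|$, to the compactness criterion already isolated in Lemma \ref{caracompareltau}.
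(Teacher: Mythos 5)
Your proposal is correct and follows essentially the same route as the paper: scalarise via $f\mapsto x^*\circ f$, verify relative compactness of the image of $B_{X^*}$ through the three conditions of Lemma \ref{caracompareltau}, obtain $w^*$-$w$ continuity by noting that coarser Hausdorff topologies coincide on the norm-compact closure of $T_f(B_{X^*})$ and then upgrading from the ball to all of $X^*$, and reverse the construction for surjectivity before invoking Proposition \ref{tensor}. The only cosmetic differences are that you use the pointwise-convergence topology where the paper uses the inherited weak-star topology of $Lip(M)$, and you argue the ball-to-global upgrade directly via Banach--Dieudonn\'e where the paper cites \cite[Proposition 3.1]{Kim}.
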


\begin{proof} It is shown in \cite{jsv} that $f\mapsto f^t$ defines an isometry from $Lip(M,X)$ onto $L_{w^*,w^*}(X^*, Lip(M)))$, where $f^t(x^*)=x^*\circ f$. Let $f$ be in $lip_\tau(M,X)$ and let us prove that $f^t\in K_{w^*,w}(X^*,lip_\tau(M))$. Notice that  $x^*\circ f$ is $\tau$-continuous for every $x^*\in X^*$. Moreover, for every $x\neq y\in M$ and every $x^*\in X^*$, we have
\begin{equation}\label{uniflittle} \frac{|x^*\circ f(x)-x^*\circ f(y)|}{d(x,y)} \leq ||x^*|| \frac{\Vert f(x)-f(y)\Vert}{d(x,y)}
\end{equation}
thus $x^\ast\circ f \in lip(M)$. Therefore $f^t(X^*)\subset lip_\tau(M)$. We claim that $f^t(B_{X^*})$ is relatively compact in $lip_\tau(M)$. In order to show that, we need to check the conditions in Lemma \ref{caracompareltau}. First, it is clear that $f^t(B_{X^*})$ is bounded. Moreover, it follows from (\ref{uniflittle}) that the functions in $f^t(B_{X^*})$ satisfy the uniform little-Lipschitz condition. Finally, $f^t(B_{X^*})$ is equicontinuous in the sense of Lemma \ref{caracompareltau}. Indeed, given $x\in M$ and $\varepsilon>0$, there exists a $\tau$-neighbourhood $U$ of $x$ such that $||f(x)-f(y)||<\varepsilon$ whenever $y\in M$. That is, 
\[\sup_{x^*\in B_{X^*}} |x^*\circ f(x)-x^*\circ f(y)|<\varepsilon\]
whenever $y\in U$, as we wanted. Now, Lemma \ref{caracompareltau} implies that $f^t(B_{X^*})$ is a relatively compact subset of $lip_\tau(M)$ and thus $f^t\in K(X^*, lip_\tau(M))\cap L_{w^*,w^*}(X^*, Lip(M))$. Finally, the set $\overline{f^t(B_{X^*})}$ is norm-compact and thus every coarser Hausdorff topology agrees on it with the norm topology. In particular, the weak topology of $lip_\tau(M)$ agrees on $f^t(B_{X^*})$ with the inherited weak-star topology of $Lip(M)$. Thus $f^t|_{ B_{X^*}}\colon B_{X^*} \to lip_\tau(M)$ is $w^*-w$ continuous. By \cite[Proposition 3.1]{Kim} we have that $f^t\in K_{w^*,w}(X^*,lip_\tau(M))$. 

It only remains to prove that the isometry is onto. For this take $T\in K_{w^*,w}(X^*,lip_\tau(M))$. We claim that $T$ is $w^*-w^*$ continuous from $X^*$ to $Lip(M)$. Indeed, assume that $\{x_\alpha^*\}$ is a net in $X^*$ weak-star convergent to some $x^*\in X^*$. Since every $\gamma\in \mathcal F(M)$ is also an element in $lip_\tau(M)^*$, we get that $\langle\gamma, Tx^*_\alpha\rangle$ converges to $\langle\gamma, Tx^*\rangle$. Thus, $T\in L_{w^*,w^*}(X^*, Lip(M)))$. By the isometry described above, there exists $f\in Lip(M,X)$ such that $T=f^t$. Let us prove that $f$ actually belongs to $lip_\tau(M,X)$. As $f^t(B_{X^*})$ is relatively compact, then by Lemma \ref{caracompareltau} we have that for every $\varepsilon>0$ there exists and $\delta>0$ such that 
\[ \sup_{0<d(x,y)<\delta} \frac{| x^*\circ f(x)-x^*\circ f(y)|}{d(x,y)} < \varepsilon\]
for each $x^*\in B_{X^*}$. By taking supremum with $x^*\in B_{X^*}$ we get that 
\[ \sup_{0<d(x,y)<\delta} \frac{\Vert f(x)-f(y)\Vert}{d(x,y)} \leq \varepsilon,\]
so $f\in lip(M, X)$. We will prove, to finish the proof, that $f$ is $\tau-||\cdot||$ continuous. To this aim pick $y\in M$ and $\varepsilon>0$. By equicontinuity of $f^t(B_{X^*})$ we can find $U$ a $\tau$-neighbourhood of $y$ such that $|x^*\circ f(y')-x^*\circ f(y)|<\varepsilon$ for every $x^*\in B_{X^*}$ and $y'\in U$. Now, 
\[ ||f(y')-f(y)||=\sup_{x^*\in B_{X^*}}|x^*(f(y')-f(y))| \leq \varepsilon\]
for every $y'\in U$. Consequently, $f$ is $\tau-||\cdot||$ continuous. So $f\in lip_\tau(M,X)$, as desired. 

Finally, if either $lip_\tau(M)$ or $X$ has the approximation property, then Proposition \ref{tensor} yields the equality $K_{w^*,w}(X^*,lip_\tau(M))=lip_\tau(M) \widehat{\otimes}_\varepsilon X$.\end{proof}

Now we get our second duality result for vector-valued Lipschitz-free Banach spaces, which extends \cite[Theorem 6.2]{kalton2004}.

\begin{theorem} \label{tauduality}
Let $M$ be a separable complete bounded pointed metric space. Suppose that $\tau$ is a metrizable topology on
$M$ so that $(M,\tau)$ is compact satisfying the property (\ref{sepaliptau}). If either $\mathcal F(M)$ or $X^*$ has (AP), then $lip_\tau(M,X)^*=\mathcal F(M,X^*).$
\end{theorem}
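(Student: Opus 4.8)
The plan is to replicate the argument of Theorem \ref{predulibrevectorproper}, with $S(M)$ replaced throughout by $lip_\tau(M)$. The backbone is the general identification recorded at the start of this section: if $S\subseteq Lip_0(M)$ is a predual of $\mathcal F(M)$ and the relevant (AP)/(RNP) hypotheses hold, then $(S\iten X)^*=\mathcal F(M)\pten X^*=\mathcal F(M,X^*)$. So it suffices to realise $lip_\tau(M,X)$ as the injective tensor product $lip_\tau(M)\iten X$ and to verify the two structural hypotheses, (AP) and (RNP), for the factor $\mathcal F(M)$.

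First I would invoke Kalton's \cite[Theorem 6.2]{kalton2004}: the hypotheses ($M$ separable and complete, $(M,\tau)$ compact, property (\ref{sepaliptau})) give $lip_\tau(M)^*=\mathcal F(M)$, so $S:=lip_\tau(M)$ is indeed a predual of $\mathcal F(M)$. Next, to pass from $lip_\tau(M,X)$ to a tensor product, I would appeal to Theorem \ref{isomcompop}, which yields $lip_\tau(M,X)=lip_\tau(M)\iten X$ provided either $lip_\tau(M)$ or $X$ has (AP). Here one must reconcile the hypothesis, stated for $\mathcal F(M)$ or $X^*$, with what Theorem \ref{isomcompop} requires; this is handled by the remark that (AP) passes to preduals, so (AP) for $\mathcal F(M)=lip_\tau(M)^*$ forces it on $lip_\tau(M)$, and (AP) for $X^*$ forces it on $X$.

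The remaining point, and the step I expect to be the main obstacle, is verifying the Radon--Nikod\'ym property for $\mathcal F(M)$, since the theorem's hypotheses mention only (AP). This is where separability enters exactly as the Asplundness of $S(M)$ did in Theorem \ref{predulibrevectorproper}: since $M$ is separable, $\mathcal F(M)$ is separable, and being a separable dual space ($\mathcal F(M)=lip_\tau(M)^*$) it automatically has (RNP); equivalently $lip_\tau(M)$ has separable dual, hence is Asplund, so $lip_\tau(M)^*=\mathcal F(M)$ has (RNP). With both (AP) (on $\mathcal F(M)$ or $X^*$) and (RNP) (on $\mathcal F(M)$) in hand, \cite[Theorem 5.33]{rya} gives $(lip_\tau(M)\iten X)^*=\mathcal F(M)\pten X^*$, and finally $\mathcal F(M)\pten X^*=\mathcal F(M,X^*)$ by \cite[Proposition 1.1]{blr}, completing the chain $lip_\tau(M,X)^*=(lip_\tau(M)\iten X)^*=\mathcal F(M)\pten X^*=\mathcal F(M,X^*)$.
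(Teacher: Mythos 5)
Your proposal is correct and follows essentially the same route as the paper: Kalton's duality $lip_\tau(M)^*=\mathcal F(M)$, the identification $lip_\tau(M,X)=lip_\tau(M)\iten X$ from Theorem \ref{isomcompop}, the observation that $\mathcal F(M)$ has (RNP) as a separable dual, and then \cite[Theorem 5.33]{rya}. You merely spell out two steps the paper leaves implicit (the transfer of (AP) to preduals and the reason for (RNP)), which is fine.
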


\begin{proof}
By \cite[Theorem 6.2]{kalton2004} we have that $lip_\tau(M)$ is a predual of $\mathcal F(M)$. Consequently, $\mathcal F(M)$ has (RNP). Therefore, we get from Theorem \ref{isomcompop} and from \cite[Theorem 5.33]{rya} that 
$$lip_\tau(M,X)^*=(lip_\tau(M)\iten X)^*=\mathcal F(M)\widehat{\otimes}_{\pi}X^*=\mathcal F(M,X^*),$$
which finishes the proof.\end{proof}

Last result applies to the following particular case (see Proposition 6.3 in \cite{kalton2004}). Given two Banach spaces $X,Y$, and $\omega$ a non trivial gauge, we will denote $lip_{\om,*}(B_{X^*},Y):= lip_{w^*}((B_{X^*}, \omega\circ ||\cdot||),Y)$.

\begin{corollary} Let $X$ and $Y$ be Banach spaces, and let $\omega$ be a non trivial gauge. Assume that $X^*$ is separable and that either $\F(B_{X^*},\omega\circ ||\cdot||)$ or $Y^*$ has (AP). Then $lip_{\om,*}(B_{X^*},Y)$ is linearly isometrically isomorphic to $lip_{\om,*}(B_{X^*}) \iten Y $ and $lip_{\om,*}(B_{X^*},Y)^*=\F((B_{X^*},\om\circ ||\cdot||),Y^*)$. 
\end{corollary}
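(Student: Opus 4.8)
The plan is to recognise this statement as a direct application of Theorem \ref{tauduality} (together with Theorem \ref{isomcompop}) to the specific pointed metric space $M := (B_{X^*}, \om \circ \|\cdot\|)$ equipped with the weak-star topology $\tau := w^*$, so that $lip_{\om,*}(B_{X^*}, Y)$ is literally $lip_\tau(M, Y)$ for the vector space $Y$. Accordingly, almost all of the work consists in checking that this pair $(M,\tau)$ meets the hypotheses of Theorem \ref{tauduality}; the substantive part of that verification, namely property (\ref{sepaliptau}), is exactly the content of \cite[Proposition 6.3]{kalton2004}, which I would invoke rather than reprove.

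First I would dispatch the structural assumptions on $M$ and $\tau$. Since $X^*$ is separable, so is $X$, whence $(B_{X^*}, w^*)$ is compact and metrizable by Banach--Alaoglu. The norm of $X^*$ is weak-star lower semicontinuous, and $\om$ is continuous and increasing with $\om(0)=0$, so $d := \om \circ \|\cdot\|$ is a $\tau$-lower semicontinuous metric on $B_{X^*}$; as $\om(t) \geq t$ for $t \in [0,1]$, this metric is topologically equivalent to the norm metric near the diagonal and hence shares its completeness and separability. Boundedness is immediate because $\mathrm{diam}_{\|\cdot\|}(B_{X^*}) \leq 2$ forces $d \leq \om(2) < \infty$, and I would fix the zero functional as the distinguished origin. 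With these checks in place, \cite[Proposition 6.3]{kalton2004} guarantees that property (\ref{sepaliptau}) holds for $lip_\tau(M) = lip_{\om,*}(B_{X^*})$, so that every hypothesis of Theorem \ref{tauduality} bearing on $M$ and $\tau$ is satisfied.

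Next I would match the approximation-property hypotheses. Theorem \ref{tauduality}, applied with the vector space $Y$ in place of the generic $X$ there, requires that either $\F(M)$ or $Y^*$ have (AP); this is precisely the assumption of the corollary, and it yields
\[ lip_{\om,*}(B_{X^*}, Y)^* = lip_\tau(M, Y)^* = \F(M, Y^*) = \F\big((B_{X^*}, \om \circ \|\cdot\|), Y^*\big). \]
For the tensor-product identification I would invoke the ``moreover'' clause of Theorem \ref{isomcompop}, which asks that either $lip_\tau(M)$ or $Y$ have (AP). Since (AP) passes to preduals and $lip_{\om,*}(B_{X^*})$ is a predual of $\F(M)$, the hypothesis that $\F(M)$ or $Y^*$ has (AP) forces $lip_{\om,*}(B_{X^*})$ or $Y$ to have it, so Theorem \ref{isomcompop} delivers $lip_{\om,*}(B_{X^*}, Y) = lip_{\om,*}(B_{X^*}) \iten Y$.

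The one delicate point, and the step I would treat most carefully, is the bookkeeping of topologies in the preceding arguments: one must read completeness, separability, and (\ref{sepaliptau}) in the metric $d = \om \circ \|\cdot\|$, while reading compactness and metrizability of $\tau$ in the weak-star topology, and confirm these are mutually consistent. This is routine but is where an oversight would most naturally arise; once it is settled, the corollary is a direct transcription of the two theorems already established.
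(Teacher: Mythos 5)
Your proposal is correct and follows exactly the route the paper intends: the corollary is stated as a direct application of Theorem \ref{tauduality} and Theorem \ref{isomcompop} to $M=(B_{X^*},\omega\circ\|\cdot\|)$ with $\tau=w^*$, with property (\ref{sepaliptau}) supplied by Proposition 6.3 of \cite{kalton2004}. Your verification of the structural hypotheses (compactness and metrizability of $(B_{X^*},w^*)$, $\tau$-lower semicontinuity of $\omega\circ\|\cdot\|$, completeness, separability, boundedness) and the (AP) bookkeeping via passage to preduals correctly fills in the details the paper leaves implicit.
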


Finally, we will take advantage of the notion of unconditional almost squareness, introduced in \cite{gr}, in order to prove non duality of the space $lip_{\om,*}(B_{X^*},Y)$ under above hypotheses. According to \cite{gr}, a Banach space $X$ is said to be \textit{unconditionally almost square} (UASQ) if, for each $\varepsilon>0$, there exists a subset $\{x_\gamma\}_{\gamma\in \Gamma}\subseteq S_X$ such that
\begin{enumerate}
\item\label{defi211} For each $\{y_1,\ldots, y_k\}\subseteq S_X$ and $\delta>0$ the set
$$\{\gamma\in\Gamma :  \Vert y_i\pm x_\gamma\Vert\leq 1+\delta\ \forall i\in\{1,\ldots, k\}\}$$
is non-empty.
\item\label{defi212} For every $F$ finite subset of $\Gamma$ and every choice of signs $\xi_\gamma\in \{-1,1\}$, $\gamma\in F$, it follows $\Vert\sum_{\gamma\in F} \xi_\gamma x_\gamma\Vert\leq 1+\varepsilon$.
\end{enumerate}
It is known that there is not any dual UASQ Banach space \cite[Theorem 2.5]{gr}.

\begin{proposition}\label{UASQparanodual} Let $X$ and $Y$ be Banach spaces, and let $\omega$ be a non trivial gauge. Assume that $X^*$ is separable. Then $lip_{\om,*}(B_{X^*},Y)$ is UASQ. In particular, it is not isometric to any dual Banach space. 
\end{proposition}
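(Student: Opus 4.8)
The plan is to establish UASQ directly, by exhibiting for each fixed $\ep>0$ an explicit family $\{x_n\}_{n\in\natu}\subseteq S_{lip_{\om,*}(\bx,Y)}$ that witnesses conditions $(\ref{defi211})$ and $(\ref{defi212})$; the non-duality assertion then follows at once from \cite[Theorem 2.5]{gr}. All the building blocks are scalar ``tents'' transplanted to $\bx$ through a single coordinate, which is precisely what keeps them $w^*$-continuous. Concretely, I would fix $v\in S_X$, $e\in S_Y$ and (by Hahn--Banach) a functional $w^*\in S_{X^*}$ with $w^*(v)=1$; the map $x^*\mapsto x^*(v)$ is $w^*$-continuous and sends $\bx$ onto $[-1,1]$. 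For a piecewise-linear tent $g\colon[-1,1]\to\real$ supported in $(0,1)$ I set $x_g(x^*):=g(x^*(v))\,e$. Then $x_g$ is $w^*$-$\|\cdot\|$ continuous, vanishes at $0$, and is Lipschitz for the original norm; by the very observation used in Proposition \ref{holderseppoints} (this is where non-triviality of $\om$ is essential) it is therefore little-Lipschitz for $\dw=\om\circ\|\cdot\|$, so $x_g\in lip_{\om,*}(\bx,Y)$. A direct computation, using the points $s\,w^*,\,t\,w^*\in\bx$ to saturate the supremum and $|x^*(v)-y^*(v)|\le\|x^*-y^*\|$ for the reverse bound, shows moreover that $\|x_g\|=\|g\|_{Lip_\om}$, the Lipschitz norm of $g$ on $([-1,1],\om\circ|\cdot|)$.

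The quantitative input is that the unit-height tent $g^u$ of width $2h$ satisfies $\|g^u\|_{Lip_\om}\ge 1/\om(h)$ (compare the peak with an endpoint), so after normalising to $\|g\|_{Lip_\om}=1$ its height obeys $\|g\|_\infty\le\om(h)\to0$ as $h\to0$. I would then fix the family by choosing points $p_n\in(0,1)$ and, inductively, widths $2h_n\to0$ small enough relative to the already chosen supports that the supports are pairwise disjoint and
$$\frac{\|g_m\|_\infty+\|g_n\|_\infty}{\om\big(\mathrm{dist}(\mathrm{supp}\,g_m,\mathrm{supp}\,g_n)\big)}\le\ep\qquad(m\neq n),$$
which is possible since the numerator can be forced below any threshold by shrinking the widths. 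Set $x_n(x^*):=g_n(x^*(v))\,e\in S_{lip_{\om,*}(\bx,Y)}$. Condition $(\ref{defi212})$ is now a pairwise computation: given a finite $F$, signs $\xi_n$ and $x^*\neq y^*$, disjointness of the supports means that at most two indices of $F$ (those whose supports contain $s:=x^*(v)$ and $t:=y^*(v)$) contribute to $\sum_{n\in F}\xi_n\big(g_n(s)-g_n(t)\big)e$. If at most one index genuinely occurs (because $s,t$ fall in a single support, or one of them lies outside all supports), the quotient is at most $\|g_n\|_{Lip_\om}=1$; if $s,t$ fall in two distinct supports, the displayed inequality bounds it by $\ep$; using $\om(\|x^*-y^*\|)\ge\om(|s-t|)$ throughout yields $\big\|\sum_{n\in F}\xi_n x_n\big\|\le1+\ep$.

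For condition $(\ref{defi211})$, given $y_1,\dots,y_k\in S_{lip_{\om,*}(\bx,Y)}$ and $\delta>0$, I would split the pairs $x^*\neq y^*$ by scale. The little-Lipschitz property of the finitely many $y_i$ provides a common $\delta'>0$ with $\|y_i(x^*)-y_i(y^*)\|\le\delta\,\om(\|x^*-y^*\|)$ whenever $\|x^*-y^*\|<\delta'$, while for $\|x^*-y^*\|\ge\delta'$ the variation of $x_n$ is controlled by $2\|g_n\|_\infty/\om(\delta')$. Choosing $n$ with $h_n$ so small that $2\|g_n\|_\infty/\om(\delta')\le\delta$ then gives $\|y_i\pm x_n\|\le1+\delta$ for every $i$, so the relevant index set is non-empty. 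The hard part, as I see it, is not any individual estimate but reconciling the two requirements inside a single fixed family: $(\ref{defi211})$ forces arbitrarily small scales $h_n\to0$ so that each $x_n$ becomes negligible against any prescribed $y_i$, whereas $(\ref{defi212})$ demands that all finite signed sums stay of norm $\le1+\ep$, and the $w^*$-continuity constraint confines us to functions factoring through one coordinate $x^*\mapsto x^*(v)$. Non-triviality of $\om$ is exactly what resolves this tension: it is what places the norm-Lipschitz tents in $lip_{\om,*}(\bx)$ in the first place, while $\om(0)=0$ makes their supremum norms vanish and thereby kills both the inter-support cross terms in $(\ref{defi212})$ and the large-scale contributions in $(\ref{defi211})$.
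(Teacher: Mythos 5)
Your proof is correct, but it takes a genuinely different route from the paper's. The paper first shows that the scalar space $lip_{\omega,*}(B_{X^*})$ is UASQ by invoking a sufficient criterion from \cite[Proposition 3.3]{gr} (a sequence of non-zero functions vanishing at a fixed point $x_0^*$ and outside balls $B(x_0^*,r_n)$ with $r_n\to 0$); the bumps are produced by taking $x_0^*$ to be a point of $w^*$-to-norm continuity of the identity on $B_{X^*}$ --- this is exactly where the separability of $X^*$ enters --- and then extending $w^*$-continuous Lipschitz data from a $w^*$-closed set via Matou\v{s}kov\'a's extension theorem. The vector-valued case is then handled abstractly: by Theorem \ref{isomcompop} the space sits between $lip_{\omega,*}(B_{X^*})\otimes Y$ and $K(Y^*,lip_{\omega,*}(B_{X^*}))$, and \cite[Proposition 2.7]{gr} transfers unconditional almost squareness. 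You instead verify the definition of UASQ directly with an explicit family of tents factoring through the single coordinate $x^*\mapsto x^*(v)$ and tensored with a fixed $e\in S_Y$; this is self-contained (no extension theorem, no transfer lemmas from \cite{gr} beyond the non-duality of UASQ spaces) and, notably, never uses the separability of $X^*$, so it proves a formally stronger statement. Both arguments rest on the same two uses of the non-trivial gauge: norm-Lipschitz functions are automatically little-Lipschitz for $\omega\circ\Vert\cdot\Vert$, and $\omega(h)\to 0$ kills the heights of normalized tents. The one place your write-up needs tightening is the inductive choice of the widths: for a pair $m<n$ the numerator $\Vert g_m\Vert_\infty+\Vert g_n\Vert_\infty$ contains the already-fixed quantity $\Vert g_m\Vert_\infty$, so shrinking $h_n$ alone cannot force the displayed inequality; you should fix the centers in advance (say $p_n=2^{-n}$, so that $\inf_{n\neq m}\vert p_n-p_m\vert>0$ for each $m$) and choose each $h_m$ small enough that $\omega(h_m)\leq\frac{\varepsilon}{2}\,\omega\bigl(\frac12\inf_{n\neq m}\vert p_n-p_m\vert\bigr)$, which bounds every cross term by $\varepsilon$. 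This is a matter of ordering the quantifiers correctly, not a gap in the argument.
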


\begin{proof} First we prove that $lip_{\om,*}(B_{X^*})$ is UASQ. By \cite[Proposition 3.3]{gr}, it suffices to show that there exists a point $x_0^*\in B_{X^*}$ and sequences $r_n$ of positive numbers and $f_n\in lip_{\om,*}(B_{X^*})$ such that $f_n\neq 0$, $f_n(x_0^*)=0$ and $f_n$ vanishes out of $B(x_0^*, r_n)$. Let $x_0^*\in S_{X^*}$ be a continuity point of the identity $I\colon (B_{X^*}, w^*)\to (B_{X^*},\Vert\cdot\Vert)$, that is, $x_0^*$ has relative weak-star neighbourhoods of arbitrarily small diameter. Take a sequence $\{W_n\}$ of relative weak-star neighbourhoods of $x_0^*$ and a sequence $r_n\rightarrow 0$ such that $0\notin W_{n}\subset B(x_0^*,r_n)\subset W_{n-1}$. For each $n$ choose $x_n^*\in W_{n}\setminus\{x_0^*\}$ and define $A_n=\{x_0^*,x_n^*\}\cup (B_{X^*}\setminus W_{n})$. Consider $f_n\colon A_n \to \mathbb R$ given by $f_n(x_n^*)=1$ and $f_n(x)=0$ otherwise. Then $A_n$ is weak-star closed and $f_n\in Lip(A_n,\Vert\cdot\Vert)\cap \mathcal C(A_n, w^*)$. By \cite[Corollary 2.5]{mat}, there exists $g_n\in Lip(B_{X^*},\Vert\cdot\Vert)\cap \mathcal C(B_{X^*}, w^*)$ extending $f_n$. Then $g_n$ is a non zero Lipschitz function which is weak-star continuous and  vanishes on $B_{X^*}\setminus B(x_0^*,r_n)$. Finally notice that $Lip(B_{X^*},\Vert\cdot\Vert)\subset lip_\omega(B_{X^*})$. Thus $\{g_n\}\subset lip_{\omega,*}(B_{X^*})$ and so $lip_{\om,*}(B_{X^*})$ is UASQ.

Now, by Theorem \ref{isomcompop}, we have that $lip_{\om,*}(B_{X^*}, Y)$ is a subspace of \linebreak $K(Y^*, lip_{\om,*}(B_{X^*}))$ which clearly contains $lip_{\om,*}(B_{X^*})\otimes Y$. Proposition 2.7 in \cite{gr} provides unconditional almost squareness of $lip_{\om,*}(B_{X^*},Y)$. Finally, the non-duality of this space follows from Theorem 2.5 in \cite{gr}. 
\end{proof}

\begin{remark}
\begin{enumerate}
\item Notice that previous result can be strengthened in case of being $Y$ a separable space with (AP). In fact, in that case $lip_{\om,*}(B_{X^*},Y)=lip_{\om,*}(B_{X^*}) \iten Y$ is a separable Banach space which contains an isomorphic copy of $c_0$ \cite[Lemma 2.6]{all}, so it can not be even isomorphic to any dual Banach space. Moreover,  up the best of our knowledge, the fact that $lip_{\omega,*}(B_{X^*},Y)$ is not a dual Banach space was not known even in real case.

\item Previous result has an immediate consequence in terms of octahedrality in Lipschitz-free Banach spaces. Recall that a Banach space $X$ is said to have an \textit{octahedral norm} if for every finite-dimensional subspace $Y$ and for every $\varepsilon>0$ there exists $x\in S_X$ verifying that $\Vert y+\lambda x\Vert>(1-\varepsilon)(\Vert y\Vert+\vert \lambda\vert)$ for every $y\in Y$ and $\lambda\in\mathbb R$. Notice that, given Banach spaces $X$ and $Y$ under the assumption of Proposition \ref{UASQparanodual}, it follows that $\mathcal F((B_{X^*},\omega\circ \Vert\cdot\Vert), Y^*)=\mathcal F((B_{X^*},\omega\circ \Vert\cdot\Vert))\pten Y^*$ has an octahedral norm because of \cite[Corollary 2.9]{llr}. Notice that this gives a partially positive answer to \cite[Question 2]{blr}, where it is wondered whether octahedrality in vector-valued Lipschitz-free Banach spaces actually relies on the scalar case.
\end{enumerate} 
\end{remark}

\section{Schur property on vector-valued Lipschitz-free spaces} \label{schur}
\bigskip

According to \cite{gjl}, a Banach space $X$ is said to have the \textit{Schur property} whenever every weakly null sequence is actually a norm null sequence, and $X$ is said to have the \textit{Dunford-Pettis property} whenever every weakly compact operator from $X$ into a Banach space $Y$ is \textit{completely continuous}, i.e. carries weakly compact sets into norm compact sets.

It is known that a dual Banach space $X^*$ has the Schur property if, and only if, $X$ has the Dunford-Pettis property and does not contain any isomorphic copy of $\ell_1$ \cite[Theorem 5.2]{gjl}. So, in order to analyse the Schur property in $\mathcal F(M,X^*)$, it can be useful analysing the Dunford-Pettis property in the predual in case such a predual exists. For this, in proper case, we can go much further.

\begin{theorem}\label{hDPPcasoreal}
Let $M$ be a proper metric space such that $S(M)$ separates points uniformly. Then $S(M)$ does not contain any isomorphic copy of $\ell_1$ and has the hereditary Dunford-Pettis property, i.e. every closed subspace of $S(M)$ has the Dunford-Pettis property. 
\end{theorem}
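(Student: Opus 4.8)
The plan is to reduce the statement to two facts: that $S(M)$ contains no copy of $\ell_1$, and that every normalized weakly null sequence in $S(M)$ admits a subsequence equivalent to the unit vector basis of $c_0$. The latter suffices for the hereditary Dunford--Pettis property: if $Y\subseteq S(M)$ is a closed subspace and $(y_n)\subset Y$, $(y_n^*)\subset Y^*$ are weakly null with $\langle y_n^*,y_n\rangle\not\to 0$, one normalizes $(y_n)$, extracts a $c_0$-subsequence spanning some $Z\cong c_0$ inside $Y$, and restricts; since the restriction map $Y^*\to Z^*$ is weak-weak continuous and $Z^*\cong\ell_1$ has the Schur property, $y_n^*|_Z\to 0$ in norm, forcing $\langle y_n^*,y_n\rangle\to 0$, a contradiction. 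Throughout I would use that, as $S(M)$ separates points uniformly, $S(M)^*=\mathcal F(M)$ (by \cite{dal2}), and that each point evaluation $\delta_x$ lies in $\mathcal F(M)$; hence weak convergence in $S(M)$ can be tested against point evaluations and against the molecules $\tfrac{\delta_x-\delta_y}{d(x,y)}$.

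For the absence of $\ell_1$ I would invoke Rosenthal's theorem and show every bounded sequence has a weakly Cauchy subsequence. Given $(g_n)$ with $\|g_n\|\le 1$, properness makes $M$ separable and $\sigma$-compact, so by equicontinuity (the $g_n$ are $1$-Lipschitz) and Arzel\`a--Ascoli on each closed ball, a diagonal argument yields a subsequence converging pointwise on $M$ to a $1$-Lipschitz $g\in Lip_0(M)=\mathcal F(M)^*$. The point is that pointwise convergence of a norm-bounded sequence already forces $\sigma(S(M),\mathcal F(M))$-convergence: for $\mu\in\mathcal F(M)$ approximated in norm by finite combinations of molecules, one interchanges limits using $\sup_n\|g_n\|<\infty$. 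Thus the subsequence is weakly Cauchy, and Rosenthal's $\ell_1$ theorem gives that $S(M)$ contains no isomorphic copy of $\ell_1$.

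For the $c_0$-subsequence property, fix a normalized weakly null $(g_n)$. Testing against $\delta_x$ gives $g_n\to 0$ pointwise, and then equicontinuity plus Arzel\`a--Ascoli upgrade this to uniform convergence on every closed ball. Consider the locally compact space $L:=(M\times M)\setminus\Delta$ and the isometric embedding $\Phi\colon S(M)\to\mathcal C_0(L)$, $\Phi(f)(x,y)=\tfrac{f(x)-f(y)}{d(x,y)}$ (membership in $\mathcal C_0(L)$ is exactly the little-Lipschitz condition near $\Delta$ together with the defining vanishing condition of $S(M)$ at spatial infinity). Uniform convergence of $g_n$ on balls gives $\Phi(g_n)\to 0$ uniformly on compact subsets of $L$, while $\|\Phi(g_n)\|_\infty=1$; so the mass of $\Phi(g_n)$ escapes to infinity in $L$. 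A gliding-hump induction then produces a subsequence and pairwise disjoint compact super-level sets $V_k=\{|\Phi(g_{n_k})|\ge\varepsilon_k\}$ with $\sum_k\varepsilon_k<\tfrac12$: having chosen $V_1,\dots,V_k$, their union is compact, so uniform smallness of a later $\Phi(g_{n_{k+1}})$ there makes $V_{k+1}$ disjoint from it. Disjointness gives the upper estimate $\|\sum_k a_kg_{n_k}\|\le 2\sup_k|a_k|$, and evaluating $\Phi(\sum_k a_kg_{n_k})$ at a norming point of the dominant coordinate gives the lower estimate $\|\sum_k a_kg_{n_k}\|\ge\tfrac12\sup_k|a_k|$; hence $(g_{n_k})$ is equivalent to the $c_0$-basis.

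The crux is the passage from pointwise to locally uniform convergence of the difference quotients, i.e. the fact that $\Phi(g_n)\to 0$ uniformly on each compact subset of $L$. This is what rules out the $\mathcal C[0,1]\supseteq\ell_2$ phenomenon and makes the disjointification possible, and it rests squarely on properness: local compactness furnishes Arzel\`a--Ascoli from the Lipschitz equicontinuity, and it guarantees $\Phi(g_n)\in\mathcal C_0(L)$ so that the super-level sets $V_k$ are compact and can be separated. The remaining estimates are routine.
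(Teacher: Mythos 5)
Your proof is correct, but it takes a genuinely different route from the paper's. The paper's proof is a two-line reduction: by \cite{dal2}, the uniform separation hypothesis makes $S(M)$ $(1+\varepsilon)$-isometric to a subspace of $c_0$, and one then quotes that $c_0$ has the hereditary Dunford--Pettis property \cite{cem} and contains no copy of $\ell_1$, both of which pass to subspaces. You instead re-derive the relevant structure from scratch: your isometric embedding $\Phi\colon S(M)\to\mathcal C_0\bigl((M\times M)\setminus\Delta\bigr)$ is exactly the de Leeuw-type transform that underlies Dalet's $c_0$-embedding (she in effect discretizes your locally compact space $L$), and your gliding-hump disjointification establishes directly the Elton--Cembranos criterion that every normalized weakly null sequence has a $c_0$-subsequence, which you correctly convert into the hereditary DPP via the Schur property of $\ell_1=c_0^*$. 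What each approach buys: the paper's proof is short but opaque, while yours is self-contained and makes visible where each hypothesis enters --- properness yields Arzel\`a--Ascoli and the compactness of the superlevel sets $V_k$; the two vanishing conditions defining $S(M)$ are precisely membership of $\Phi(f)$ in $\mathcal C_0(L)$; and the hypothesis that $S(M)$ separates points uniformly is used only in the no-$\ell_1$ half, through the duality $S(M)^*=\mathcal F(M)$ which lets you upgrade pointwise convergence of bounded sequences to weak Cauchyness (the hereditary DPP half works for any proper $M$, since the evaluations $\delta_x$ are bounded functionals on $S(M)$ regardless). Two small points worth writing out: in the DPP reduction, note that $\|y_n\|$ is bounded below (else $\langle y_n^*,y_n\rangle\to 0$ trivially) so that normalization preserves weak nullity; and record explicitly that $\|\Phi(f)\|_\infty=\|f\|_{Lip}$, which is what makes the lower $c_0$-estimate at a near-norming point of the dominant coordinate legitimate.
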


\begin{proof} By \cite{dal2} we get that $S(M)$ is $(1+\varepsilon)$ isometric to a subspace of $c_0$, which is known to have the hereditary Dunford-Pettis (see e.g. \cite{cem}). Consequently, $S(M)$ has the hereditary Dunford-Pettis property. Obviously, previous condition also implies that $S(M)$ does not contain any isomorphic copy of $\ell_1$.
\end{proof}

Above theorem not only applies in the scalar valued version of $S(M)$ but also in the vector valued one. Indeed, we get the following result.

\begin{theorem}
Let $M$ be a proper metric space such that $S(M)$ separates points uniformly. Assume that $X$ is a Banach space which the hereditary Dunford-Pettis property and that $X$ does not contain any isomorphic copy of $\ell_1$. If either $X$ or $S(M)$ has (AP), then $S(M,X)$ does not contain any isomorphic copy of $\ell_1$ and has the hereditary Dunford-Pettis property. 
\end{theorem}

\begin{proof}
As $S(M)\iten X=S(M,X)$ holds because of Theorem \ref{predulibrevectorproper}, then it does not contain any isomorphic copy of $\ell_1$ \cite[Corollary 4]{ros}. Moreover, as $S(M)$ is isomorphic to a subspace of $c_0$, then $S(M,X)=S(M)\iten X$ is isomorphic to a subspace of $c_0\iten X=c_0(X)$. As $c_0(X)$ has the hereditary Dunford-Pettis property whenever $X$ has the hereditary Dunford-Pettis property \cite[Theorem 3.1]{ko} we get that $S(M,X)$ has the hereditary Dunford-Pettis property, so we are done.\end{proof}

We say that a Banach space $X$ has the \textit{strong Schur property} if there exists a constant $K > 0$ such
that, given $\delta >0$ and a sequence $(x_n)_{n\in\mathbb N}$ in the unit ball of $X$ verifying $\Vert x_n-x_m\Vert\geq \delta$ for all $n\neq m$, then $\{x_n\}$ contains a subsequence that is $\frac{K}{\delta}$ -equivalent to the unit vector basis of $\ell_1$. As it is known that a dual Banach space $X^*$ has the strong Schur property whenever $X$ does not contain any isomorphic copy of $\ell_1$ and has the hereditary Dunford-Pettis property \cite[Theorem 4.1]{ko}, we get the following Corollary.

\begin{corollary}\label{corostrongschur}
Let $M$ be a proper metric space such that $S(M)$ separates points uniformly. Assume that $X$ is a Banach space with the hereditary Dunford-Pettis property and that $X$ does not contain any isomorphic copy of $\ell_1$. If either $X^*$ or $\mathcal F(M)$ has (AP), then $\mathcal F(M,X^*)$ has the strong Schur property.
\end{corollary}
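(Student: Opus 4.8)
The plan is to assemble Corollary \ref{corostrongschur} from the machinery already developed, essentially by funneling everything through the characterization of the strong Schur property in a dual space. The key structural fact I would invoke is \cite[Theorem 4.1]{ko}, which states that a dual Banach space $Z^*$ has the strong Schur property as soon as $Z$ does not contain an isomorphic copy of $\ell_1$ and enjoys the hereditary Dunford-Pettis property. So the entire task reduces to producing a \emph{predual} of $\mathcal F(M,X^*)$ that meets these two hypotheses.

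First I would identify the candidate predual. Under the standing assumptions---$M$ proper, $S(M)$ separating points uniformly, and either $X^*$ or $\mathcal F(M)$ having (AP)---Theorem \ref{predulibrevectorproper} gives precisely $S(M,X)^* = \mathcal F(M,X^*)$. Thus $Z = S(M,X)$ is the predual I need, and it remains only to verify that $S(M,X)$ has the two properties required by \cite[Theorem 4.1]{ko}. But this is exactly the content of the theorem immediately preceding the corollary: under the hypotheses that $X$ has the hereditary Dunford-Pettis property, $X$ contains no copy of $\ell_1$, and either $X$ or $S(M)$ has (AP), the space $S(M,X)$ inherits both of these properties.

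The only point requiring a moment's care is the bookkeeping on the approximation property hypothesis, since the preceding theorem is stated with \com{either $X$ or $S(M)$ has (AP)} whereas Theorem \ref{predulibrevectorproper} and the corollary are phrased with \com{either $X^*$ or $\mathcal F(M)$ has (AP)}. These are harmonized by the remark in the Notation section that (AP) passes to preduals: since $\mathcal F(M) = S(M)^*$ and $X^* = X^*$, the assumption that $\mathcal F(M)$ has (AP) forces $S(M)$ to have (AP), and the assumption that $X^*$ has (AP) forces $X$ to have (AP). Hence the hypothesis of the corollary implies the hypothesis of the preceding theorem, so both may be applied simultaneously. I would then simply chain the three results: Theorem \ref{predulibrevectorproper} to get the duality, the preceding theorem to get the predual properties, and \cite[Theorem 4.1]{ko} to conclude the strong Schur property.

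I do not anticipate a genuine obstacle here, as the corollary is a direct consequence of results established earlier in the section together with a cited black-box theorem. The subtlest step is the (AP)-transfer argument just described, which is the only place where the precise form of the hypotheses must be reconciled; everything else is a one-line deduction. The proof should therefore read as a short synthesis rather than a new argument.
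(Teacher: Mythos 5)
Your proposal is correct and follows exactly the route the paper intends: the corollary is obtained by combining the preceding theorem (which gives that $S(M,X)$ has the hereditary Dunford--Pettis property and contains no copy of $\ell_1$), Theorem \ref{predulibrevectorproper} (which identifies $\mathcal F(M,X^*)$ as $S(M,X)^*$), and the cited result of Knaust and Odell on dual spaces of such preduals. Your care in transferring the (AP) hypothesis from $X^*$ and $\mathcal F(M)$ down to $X$ and $S(M)$ via the remark that (AP) passes to preduals is exactly the reconciliation the paper leaves implicit.
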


\begin{remark}
Above corollary should be compared with \cite[Proposition 2.11]{pet} in real case.
\end{remark}

Bearing in mind the identification $\mathcal F(M,X)=\mathcal F(M)\pten X$, philosophically we can say that we have obtained a result about Schur property in $\mathcal F(M,X)$ from tensor product theory. Now we are going to state a result in the reverse direction, i.e. we find conditions which guarantee that $\mathcal F(M,X)$ has the Schur property and, as a consequence, we get examples of Banach spaces with Schur property whose projective tensor product still has the Schur property. Note that such examples are interesting because, up the best of our knowledge, it is an open problem how projective tensor product preserves previous property \cite[Remark 6]{gg}. 

For this, we will analyse the uniformly discrete case, for which Kalton proved in \cite{kalton2004} that the scalar valued Lipschitz-free Banach space has the Schur property. Here we extend this result to a vector-valued setting.

\begin{proposition}
Let $(M,d)$ be an uniformly discrete metric space, that is, assume that $\theta = \inf_{m_1 \neq m_2} d(m_1,m_2) > 0$, and let $X$ be a Banach space with the Schur property. Then $\F(M,X)$ has the Schur property.
\end{proposition}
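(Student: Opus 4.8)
The plan is to exploit the identification $\mathcal{F}(M,X)=\mathcal{F}(M)\pten X$ together with the Schur property of $X$, running a gliding-hump argument in the spirit of Kalton's proof of the scalar case. Let $(u_n)$ be a weakly null sequence in $\mathcal{F}(M,X)$ with $\|u_n\|\le 1$; the goal is to show $\|u_n\|\to 0$. The first step is to produce coordinate operators. For $m\in M\setminus\{0\}$ the indicator $\mathbf{1}_{\{m\}}$ belongs to $Lip_0(M)$ with $\|\mathbf{1}_{\{m\}}\|_{Lip}\le 1/\theta$, by uniform discreteness. Hence $P_m:=\mathbf{1}_{\{m\}}\otimes\mathrm{id}_X\colon\mathcal{F}(M)\pten X\to X$ is a bounded operator with $\|P_m\|\le 1/\theta$ which reads off the coefficient at $m$, i.e.\ $P_m(\delta_{m',x})=x$ if $m'=m$ and $0$ otherwise. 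Being bounded and linear, each $P_m$ is weak-to-weak continuous, so $P_m u_n\to 0$ weakly in $X$; since $X$ has the Schur property, $\|P_m u_n\|\to 0$ for every fixed $m$. Thus the sequence is coordinatewise norm-null.

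It remains to upgrade coordinatewise nullity to norm nullity, and here I would argue by contradiction, assuming $\|u_n\|\ge\varepsilon>0$ after passing to a subsequence. Approximating each $u_n$ in norm by a finitely supported element and decomposing it into elementary (balanced) molecules $\frac{\delta_p-\delta_q}{d(p,q)}\otimes x$, I distinguish two cases. If the supports can be confined, up to an arbitrarily small uniform error, to a fixed finite set $F\ni 0$, then $(u_n)$ lives essentially in $\mathcal{F}(F,X)=\mathcal{F}(F)\pten X$, which is isomorphic to a finite direct sum of copies of $X$ because $\mathcal{F}(F)$ is finite-dimensional; such a space has the Schur property, forcing $\|u_n\|\to 0$ and contradicting the assumption. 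Otherwise the mass of the $u_n$ escapes to infinity, and the coordinatewise nullity lets me, via a gliding-hump selection, pass to a subsequence and extract balanced molecule-blocks $w_k$ (pieces of $u_{n_k}$) whose supports are pairwise far apart and avoid a fixed ball around the origin, with $\|w_k\|$ bounded below and $\|u_{n_k}-w_k\|\to 0$; in particular $(w_k)$ is again weakly null.

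The crux of the argument is the following transport estimate, which I expect to be the main obstacle: a finite linear combination of balanced molecules supported on pairwise far-separated regions is $\ell_1$-additive, that is, $\|\sum_k a_k w_k\|\gtrsim\sum_k|a_k|\,\|w_k\|$. I would prove this through Kantorovich--Rubinstein duality, by gluing near-optimal $1$-Lipschitz $X^*$-valued functions for the individual $w_k$ into a single $1$-Lipschitz function: the essential point is that, because each $w_k$ is \emph{balanced}, the long-range transport to the base point cancels, so the large metric gaps between the regions make the glued function $1$-Lipschitz with negligible loss and prevent the cancellation at the common origin that would otherwise destroy additivity. Uniform discreteness is what guarantees that the regions can be chosen well separated relative to their internal diameters. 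Granting this estimate, the extracted normalized blocks are equivalent to the unit vector basis of $\ell_1$ and therefore cannot be weakly null, contradicting the weak nullity of $(w_k)$. Hence $\|u_n\|\to 0$ and $\mathcal{F}(M,X)$ has the Schur property.
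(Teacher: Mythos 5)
Your strategy --- coefficient functionals, a gliding hump, and an $\ell_1$-lower estimate for metrically separated blocks --- is genuinely different from the paper's proof, which instead imports Kalton's decomposition lemma to realise $\mathcal F(M)$ as a complemented subspace of $(\sum_k\mathcal F(M_k))_{\ell_1}$ with $M_k=\overline{B}(0,2^k)$, identifies each $\mathcal F(M_k,X)$ with $\ell_1(M_k,X)$ using that $M_k$ is bounded and uniformly discrete, and finishes with stability of the Schur property under $\ell_1$-sums and subspaces. Your route is attractive because it would be self-contained, but as written it has two genuine gaps.

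First, coordinatewise nullity is too weak, and your case dichotomy does not close. Uniform discreteness does not imply properness: a ball $B(0,R)$ may contain infinitely many points, so knowing $\Vert P_m u_n\Vert\to 0$ for each fixed $m$ does not let you discard the part of $u_n$ carried by a fixed ball, which is exactly what the hump selection needs. Likewise, in your first case the finite set $F$ and the tolerance are coupled, and the norm of the natural projection onto $\mathcal F(F,X)$ grows with $F$, so you cannot pass to the limit in the tolerance. The repair is to observe that the full coefficient map $u\mapsto(P_mu)_{m\in M}$ is bounded from $\mathcal F(M,X)$ into $\ell_1(M,X)$ with norm at most $2/\theta$ (glue the functionals $\frac{\theta}{2}x_m^*$ exactly as in the paper's lower estimate for $\mathcal F(M_k,X)$); since $\ell_1(M,X)$ has the Schur property when $X$ does, weak nullity already forces $\sum_m\Vert P_mu_n\Vert\to 0$, which controls the restriction of $u_n$ to $B(0,R)$ by a factor $R$ and makes the selection work with no case distinction.

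Second, the blocks produced by deleting the part of $u_{n_k}$ inside $B(0,S_k)$ are \emph{not} balanced, and cannot be balanced up to small error: the coefficient defect $v_k=\sum_m x_m$ is indeed small in $X$ (of order $1/S_k$, by pairing with $m\mapsto\min(d(m,0)/S_k,1)\,x^*$), but reinstating it as $\delta_{m_0}\otimes v_k$ at a point $m_0$ with $d(m_0,0)\geq S_k$ costs $d(m_0,0)\Vert v_k\Vert$, which is of order $1$. So the cancellation mechanism you invoke for the gluing (``balancedness kills the long-range transport to the base point'') is not available for the blocks your construction actually delivers. The $\ell_1$-additivity of blocks supported in lacunary annuli $\{S_k\leq d(\cdot,0)\leq R_k\}$, $S_{k+1}\gg R_k$, is nevertheless true, but proving it requires a globally defined $X^*$-valued test function, and the naive gluing/cut-off of the norming functions $f_k$ runs into the fact that $\Vert f_k\Vert_\infty$ on the $k$-th annulus can be as large as $R_k\gg S_k$; controlling this is essentially the content of Kalton's Lemma 4.2, which is exactly the point where the paper stops arguing and cites Kalton. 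In short: you have correctly located the crux, but the proposal neither proves it nor reduces to a correct citation, and the block structure it feeds into the crux is not the one the gliding hump produces.
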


\begin{proof}
For this purpose we will need Kalton's decomposition (see Lemma 4.2 in \cite{kalton2004}). That is, there exist a constant $C>0$ and a sequence of operators $T_k\colon \F(M) \to \F(M_k)$, where $k\in \mathbb Z$ and $M_k$ denotes the closed ball $\overline{B}(0,2^k)$, satisfying 
\[ \gamma = \sum_{k\in \mathbb Z} T_k \gamma \text{ unconditionally and } \sum_{k\in \mathbb Z} \|T_k \gamma \| \leq C\|\gamma\|\]
for every $\gamma\in \mathcal F(M)$. Now, using this decomposition, we can consider $S\colon \F(M) \to (\sum\F(M_k))_{\ell_1}$ defined by $S \gamma = (T_k \gamma)_k$. So $S$ defines an isomorphism between $\F(M)$ and a closed subspace of $(\sum\F(M_k))_{\ell_1}$.

We will show that the image of $\F(M)$ is complemented in $(\sum\F(M_k))_{\ell_1}$. To achieve this we define $P$: $(\sum\F(M_k))_{\ell_1} \to S(\F(M))$ by $P((\gamma_k)_k) = \nolinebreak (T_k \gamma)_k$, where $\gamma = \sum_k \gamma_k$. Then $P$ is a well defined projection. Indeed, if \linebreak $(\gamma_k)_k \in (\sum\F(M_k))_{\ell_1}$ then $P(P((\gamma_k)_k)) = P((T_k\gamma)_k)$. Now, if we define \linebreak $\gamma := \sum_{k\in \mathbb Z} T_k \gamma$, it follows that $P((T_k\gamma)_k) = (T_k \gamma)_k$, which proves that $P\circ P=P$ . Notice that $P$ is continuous since, given $(\gamma_k)\in (\sum\F(M_k))_{\ell_1}$, if we define $\gamma:=\sum_{k\in\mathbb Z} \gamma_k$, we have the following chain of inequalities 
\[\Vert P((\gamma_k))\Vert =\left \|\sum_{k\in \mathbb Z} T_k \gamma \right\| \leq \sum_{k\in \mathbb Z} \|T_k \gamma \| \leq C \|\gamma\| = C \left \|\sum_{k\in \mathbb Z} \gamma_k\right \| \leq C \sum_{k\in \mathbb Z} \|\gamma_k\|.\]

Thus $\F(M) \pten X$ is isomorphic to a subspace of $(\sum\F(M_k))_{\ell_1} \pten X$  \cite[Proposition 2.4]{rya}. It is not difficult to prove that $(\sum\F(M_k))_{\ell_1} \pten X$ is isometrically isomorphic to $(\sum \F(M_k) \pten X)_{\ell_1}$. Consequently, we have that $\F(M,X)$ is isomorphic to a subspace of $(\sum \F(M_k,X))_{\ell_1}$.

In order to finish the proof, we will prove that $\F(M_k,X)$ has the Schur property for every $k$, which will be enough since the Schur property is stable under $\ell_1$ sums \cite{tan} and by passing to subspaces. To do that, we will show that $\F(M_k,X)$ is isomorphic to $\ell_1(M_k, X)$ (the space of all absolutely summable families in $X$ indexed by $M_k$), which enjoys the Schur property since $X$ has it. Consider $F$ a finite set, $(a_i)_{i \in F}$ a finite sequence of scalars and $\gamma = \sum_{i \in F} a_i \delta_{m_i,y_i} \in \F(M_k,X)$. Using the triangle inequality we have $\|\gamma\| \leq \sum_{i \in F} |a_i| \|\delta_{m_i}\| \|y_i\| \leq 2^k\sum_{i \in F} |a_i| \|y_i\|$. Moreover, for each $i\in F$, pick $x_i^* \in X^*$ such that $x_i^*(y_i)=sign(a_i)\|x_i\|$ and define $f$: $M_k \to X^*$ by the equation
$$f(m):=\begin{cases}
x_i^* & \mbox{if }m=m_i  \mbox{ for some } i\in F,\\
0 & \mbox{otherwise.}
\end{cases}$$ 
Since $2^{-k} \|f\|_{\infty} \leq \|f\|_{Lip} \leq 2\theta^{-1}\|f\|_{\infty}$, we get that $\|f\|_{Lip} \leq 2\theta^{-1}$. Thus $\|\gamma\| \geq \langle \frac{\theta}{2}f ,  \gamma  \rangle = \frac{\theta}{2}\sum_{i \in F} |a_i|\|x_i\|$. This proves that the linear operator \linebreak $T$: $\F(M_k,X) \to \ell_1(M_k, X)$ defined by $T(\sum_{i \in F} a_i \delta_{m_i,x_i}) = (z_m)_{m \in M_k}$, where $z_{m_i} = a_i x_i$ and $z_m=0$ otherwise, is an isomorphism.
\end{proof}

\begin{remark}\label{unifdiscstrongSchur}
Since $\mathcal F(M_k,X^*)$ is isomorphic to $\ell_1(X^*)$, we get that $\mathcal F(M,X^*)$ has the strong Schur whenever $X^*$ has it in the above proposition. Indeed, this follows from the two next propositions.
\end{remark}

\begin{proposition} \label{sumfinite}
Let $(X_k)_{k=1}^N$ be a finite family of Banach spaces. Assume that each $X_k$ has the strong Schur property with the same constant $K$ in the definition of this property. Then $X=(\sum_{k=1}^N X_k)_{\ell_1}$ has the strong Schur property with constant $K+\varepsilon$ for every $\varepsilon >0$. 
\end{proposition}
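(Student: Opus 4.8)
The plan is to argue directly from the definition of the strong Schur property. Fix $\varepsilon>0$, let $\delta>0$, and let $(x_n)_n$ be a sequence in $B_X$ with $\|x_n-x_m\|\ge\delta$ for $n\neq m$. Writing $x_n=(x_n^1,\dots,x_n^N)$ with $x_n^k\in X_k$, recall that $\|x_n-x_m\|=\sum_{k=1}^N\|x_n^k-x_m^k\|_{X_k}$. The goal is to extract a subsequence $(x_{n_j})_j$ for which $\|\sum_j a_j x_{n_j}\|\ge \tfrac{\delta}{K+\varepsilon}\sum_j|a_j|$ for all scalars $(a_j)$; the reverse inequality with constant $1$ is automatic, since each $x_{n_j}$ lies in $B_X$.

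First I would \emph{stabilise} the coordinatewise distances. Since $\|x_n^k-x_m^k\|\le 2$ for every $k$, I partition $[0,2]$ into finitely many intervals of length less than a parameter $\eta$ (to be fixed in terms of $\delta$ and $\varepsilon$), and colour each pair $\{n,m\}$ by the $N$-tuple recording, for each coordinate $k$, the interval containing $\|x_n^k-x_m^k\|$. By the infinite Ramsey theorem there is an infinite subsequence, which I relabel as $(x_n)_n$, and numbers $c_1,\dots,c_N\ge 0$ such that $c_k\le\|x_n^k-x_m^k\|<c_k+\eta$ for all $n\neq m$ and all $k$. In particular each coordinate sequence $(x_n^k)_n$ lies in $B_{X_k}$ and is $c_k$-separated, and comparing the lower estimates with $\delta\le\sum_k\|x_n^k-x_m^k\|<\sum_k c_k+N\eta$ gives $\sum_{k=1}^N c_k> \delta-N\eta$.

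Next I would apply the strong Schur property of the $X_k$ one coordinate at a time. For each $k$ with $c_k>0$, the $c_k$-separated sequence $(x_n^k)_n\subset B_{X_k}$ admits, by hypothesis, a subsequence that is $\tfrac{K}{c_k}$-equivalent to the unit vector basis of $\ell_1$. Passing to nested subsequences $S_1\supseteq S_2\supseteq\cdots\supseteq S_N$ (noting that a lower $\ell_1$-estimate is inherited by any further subsequence, and that coordinates with $c_k=0$ require no extraction) I obtain a single subsequence $(x_{n_j})_j$ for which $\|\sum_j a_j x_{n_j}^k\|\ge \tfrac{c_k}{K}\sum_j|a_j|$ holds simultaneously for every $k$. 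Summing over $k$ yields
$$\Big\|\sum_j a_j x_{n_j}\Big\|=\sum_{k=1}^N\Big\|\sum_j a_j x_{n_j}^k\Big\|\ge \frac{\sum_k c_k}{K}\sum_j|a_j|\ge\frac{\delta-N\eta}{K}\sum_j|a_j|,$$
and choosing $\eta$ small enough that $\tfrac{\delta-N\eta}{K}\ge\tfrac{\delta}{K+\varepsilon}$ (for instance $\eta<\tfrac{\delta\varepsilon}{N(K+\varepsilon)}$) finishes the argument. Note that this choice of $\eta$ may depend on $\delta$, but the resulting strong Schur constant $K+\varepsilon$ does not.

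The delicate point—and the reason a naive pigeonhole over coordinates would only yield a constant of order $NK$—is that the separation $\delta$ need not be carried by a single coordinate, nor need the coordinatewise infimum separations $\inf_{n\neq m}\|x_n^k-x_m^k\|$ add up to anything close to $\delta$, since distinct pairs may owe their separation to distinct coordinates. The Ramsey stabilisation is exactly what repairs this: by forcing every pair to realise essentially the same distance in each coordinate, it guarantees $\sum_k c_k$ is close to $\delta$, so that the coordinatewise $\ell_1$-lower bounds may be added without loss. I expect this stabilisation step to be the heart of the proof; the remaining passages to subsequences and the final summation are routine.
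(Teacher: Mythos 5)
Your proof is correct. The overall strategy coincides with the paper's: extract a subsequence along which the coordinatewise separations sum to nearly $\delta$, apply the strong Schur property in each coordinate, nest the resulting subsequences, and add the lower $\ell_1$-estimates using the additivity of the $\ell_1$-sum norm. Where you genuinely differ is in how the stabilised subsequence is produced. The paper introduces $\Delta=\sup\{\sum_{k}\delta_k\}$ over all subsequences whose $k$-th coordinate sequences are $\delta_k$-separated, and argues by contradiction that $\Delta\ge\delta$: starting from a near-optimal subsequence it asserts that only finitely many pairs can have all coordinate distances exceeding $\delta_k+\varepsilon/N$, and discards them. You instead colour each pair by the $N$-tuple of $\eta$-intervals containing its coordinate distances and invoke the infinite Ramsey theorem to pin every coordinate distance into a window $[c_k,c_k+\eta)$, whence $\sum_k c_k>\delta-N\eta$. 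The two devices serve the same purpose, but yours is arguably tighter: the paper's finiteness claim does not follow literally from the definition of $\Delta$, since an infinite collection of ``bad'' pairs need not contain an infinite subset all of whose pairs are bad --- a Ramsey-type extraction is implicitly needed there as well, and your argument supplies it explicitly. The remaining bookkeeping in your write-up (coordinates with $c_k=0$, inheritance of lower $\ell_1$-estimates by further subsequences, the choice $\eta<\delta\varepsilon/(N(K+\varepsilon))$, and the observation that the final constant $K+\varepsilon$ is independent of $\delta$) is all in order.
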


\begin{proof}
Fix $\delta >0$ and let $(x_n)_n$ be a $\delta$-separated sequence in the unit ball of $X$. We denote $(x_n')_n \prec (x_n)_n$ to mean that $(x_n')_n$ is a subsequence of $(x_n)_n$. We consider the following quantity:
\begin{eqnarray*}
\Delta = \sup \left\{\sum_{k=1}^N \delta_k : \exists (x_n')_n \prec (x_n)_n , \forall k=1\cdots N, (x_n'(k))_n \mbox{ is }\delta_k- separated \right\}
\end{eqnarray*}
We will show that $\Delta \geq \delta$. Let $\ep>0$ arbitrary, and assume that $\Delta<\delta -\ep$. Then there exist $(x_n')_n$ a subsequence of $(x_n)_n$ and $(\delta_1,\cdots,\delta_N) \in [0,2]^N$ such that, for every $k$, $(x_n'(k))_n$ is $\delta_k$-separated and $\sum_{k=1}^N \delta_k > \Delta -\ep$. By definition of $\Delta$, there only exists a finite number of couples $(n,m)\in \mathbb N^2$ with $n\neq m$ satisfying 
$$\|x_n'(k) - x_m'(k)\|> \delta_k + \frac{\ep}{N}\ \mbox{for every }k\in\mathbb N.$$
By passing to a further subsequence we can assume that, for every $n \neq m$ and every $k$, $\|x_n(k)-x_m(k)\| \leq \delta_k +\varepsilon/N$. Thus $\sum_{k=1}^N \|x_n(k) - x_m(k) \| \leq \sum_{k=1}^N \delta_k + \varepsilon < \delta$, which contradicts the $\delta$-separation of the original sequence. Consequently $\Delta \geq \delta - \ep$. Since $\varepsilon$ was arbitrary, we get that $\Delta \geq \delta$.

We now consider $(x_n')_n$ a subsequence of $(x_n)_n$ such that, for every $k$, $(x_n'(k))_n$ is $\delta_k$-separated and $\sum_{k=1}^N \delta_k \geq \delta -\varepsilon$. Since each $X_k$ has the strong Schur property, for every $k$ with $\delta_k>0$, there exists a subsequence of $(x_n'(k))_n$, still denoted by $(x_n'(k))_n$ for convenience, such that $(x_n'(k))_n$ is $(K/\delta_k)$-equivalent to the $\ell_1$-basis. Next, by a diagonal argument, there exists $(x_n'')_n$ a subsequence of $(x_n')_n$ such that, for every $k$ with $\delta_k>0$, $(x_n''(k))_n$ is $(K/\delta_k)$-equivalent to the $\ell_1$-basis. Then a simple computation shows that, for every $(a_i)_{i=1}^m \in \R^m$, it follows
\begin{eqnarray*}
\left\|\sum_{i=1}^m a_i x_i''\right \| &=& \sum_{k=1}^N \left\|\sum_{i=1}^m a_i x_i''(k)\right \| \\
&\geq& \sum_{k=1}^N \frac{\delta_k}{K} \sum_{i=1}^m |a_i| \\
&\geq& \frac{\delta - \varepsilon}{K} \sum_{i=1}^m |a_i|.
\end{eqnarray*}
This proves that $(x_n'')_n$ is $\frac{K}{\delta-\varepsilon}$-equivalent to the $\ell_1$-basis.
\end{proof}

Now extend the previous result to infinite $\ell_1$-sums. To achieve this we need to assume that the spaces $X_k$ are dual ones. 

\begin{proposition}
Let $(X_k)_{k\in \mathbb N}$ be a family of Banach spaces. Assume that each $X_k^*$ has the strong Schur property with the same constant $K$ in the definition of this property. We consider $X=(\sum_{k\in \mathbb N} X_k)_{c_0}$ and its dual space $X^* = (\sum_{k\in \mathbb N} X_k^*)_{\ell_1}$. Then $X^*$ has the strong Schur property with constant $\max\{2K+\varepsilon,4+\varepsilon\}$ for every $\varepsilon>0$. 
\end{proposition}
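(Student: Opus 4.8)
The plan is to reduce the infinite $c_0$-sum to the finite case already handled in Proposition~\ref{sumfinite}, paying the price of an extra factor of $2$ that comes from splitting each vector into a ``head'' (finitely many coordinates) and a ``tail''. Let $(x_n)_n$ be a $\delta$-separated sequence in the unit ball of $X^* = (\sum_k X_k^*)_{\ell_1}$. First I would observe that, since each $x_n$ is $\ell_1$-summable, for a fixed $\varepsilon$ one can find for each $n$ an index $N_n$ beyond which the tail $\sum_{k>N_n}\|x_n(k)\|$ is small. A diagonal/pigeonhole extraction should let me pass to a subsequence for which a \emph{common} cut-off $N$ works, so that writing $x_n = u_n + v_n$ with $u_n$ supported on $\{1,\dots,N\}$ and $v_n$ on $\{k>N\}$, the tails $v_n$ have norm at most $\varepsilon$ (or at least uniformly small). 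The head part then lives in the finite $\ell_1$-sum $(\sum_{k=1}^N X_k^*)_{\ell_1}$, to which Proposition~\ref{sumfinite} applies with the common constant $K$.

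The key steps, in order, are as follows. Step one: the uniform tail-truncation just described, producing the splitting $x_n=u_n+v_n$ with $\|v_n\|\le\varepsilon$ after passing to a subsequence. Step two: a case distinction according to whether the heads or the tails carry the separation. If the heads $(u_n)_n$ remain $(\delta-c\varepsilon)$-separated for a large fraction of pairs, apply Proposition~\ref{sumfinite} to $(u_n)_n$ to extract a subsequence equivalent to the $\ell_1$-basis with constant roughly $K/(\delta-c\varepsilon)$; since the $v_n$ are a small perturbation, a standard stability argument (small perturbations of an $\ell_1$-sequence remain $\ell_1$-equivalent) transfers this to $(x_n)_n$, accounting for the factor near $2K$. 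Step three: the complementary regime, where after truncation the heads nearly coincide and the separation is forced onto the tails; here one argues more crudely, using that the whole space is an $\ell_1$-sum to get that differences $x_n-x_m$ have $\ell_1$-norm bounded below by the separation, which directly yields $\ell_1$-equivalence with a constant of order $4$, explaining the $\max\{2K+\varepsilon,4+\varepsilon\}$.

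The main obstacle I expect is controlling the two regimes simultaneously and making the truncation index $N$ uniform across the subsequence. One must be careful that the extraction in Step one does not interact badly with the later extraction from Proposition~\ref{sumfinite}, and that the perturbation estimate in Step two is quantitative enough to keep the constant below $2K+\varepsilon$. The bookkeeping between $\delta$, the tail bound $\varepsilon$, and the fraction of ``good'' pairs is where the real work lies; the two extreme cases are each easy, but organizing the argument so that \emph{every} $\delta$-separated sequence falls into one of them (after passing to a subsequence) requires a careful Ramsey-type or diagonal extraction. The appearance of the constant $4$ rather than $2K$ in the tail-dominated case suggests the authors absorb the worst configuration directly rather than through Proposition~\ref{sumfinite}, so I would verify that the crude $\ell_1$-lower bound in Step three indeed gives constant $4+\varepsilon$ and not something larger.
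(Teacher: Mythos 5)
Your Step one does not go through: for a general bounded sequence in $(\sum_k X_k^*)_{\ell_1}$ there is no subsequence admitting a \emph{common} cut-off $N$ with uniformly small tails. The unit vector basis of $\ell_1$ (take $X_k^*=\mathbb R$ for all $k$) already defeats this: for any fixed $N$, all but finitely many terms carry their entire mass beyond coordinate $N$, and this persists under passing to subsequences. Since the whole architecture of your Steps two and three rests on the decomposition $x_n=u_n+v_n$ with $\|v_n\|\le\varepsilon$, the plan collapses at the outset. There is also an internal tension: if the tails really had norm at most $\varepsilon<\delta/4$, they could never ``carry the separation'', so your Step three would be vacuous; the fact that you need it is a signal that the truncation cannot be made uniform. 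The correct dichotomy, which is the one the paper uses, is: either some subsequence stays within $\delta/4$ of a fixed finite sub-sum $\sum_{k=1}^N X_k^*$ --- in which case one applies the norm-one projection $P_N$, notes that $(P_N x_n')_n$ is $(\delta/2)$-separated, invokes the finite-sum proposition, and concludes via $\|\sum_i a_i x_i'\|\ge\|\sum_i a_i P_N x_i'\|$ (no perturbation argument is even needed here) --- or no subsequence does, i.e.\ mass keeps escaping to infinity in the coordinates.

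In that second regime your ``crude $\ell_1$-lower bound'' is not an argument: $\delta$-separation is a statement about the pairs $x_n-x_m$ and does not by itself yield the lower bound $\|\sum_i a_i x_i\|\ge c\sum_i|a_i|$ for all finite linear combinations --- if it did, the strong Schur property would be trivial for every $\ell_1$-sum. What is actually needed, and what the paper does, is a gliding-hump construction: first pass to a subsequence that is weak-star null with $\|x_n\|\ge\delta/2$ (using \cite[Lemma 2.13]{pet}), then inductively choose indices $n_1<n_2<\cdots$ and cut-offs $N_1<N_2<\cdots$ so that $\|P_{N_i}(x_{n_i})-P_{N_{i-1}}(x_{n_i})\|>\delta/4$ while $\|P_{N_i}(x_{n_i})-x_{n_i}\|<\varepsilon_i$. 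The resulting almost disjointly supported blocks give $\frac{4}{\delta(1-\varepsilon)}$-equivalence to the $\ell_1$-basis, and the principle of small perturbations transfers this back to $(x_{n_i})_i$. This construction is where the constant $4$ actually comes from, and it is the substantive part of the proof that your outline omits.
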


\begin{proof}
For $N\in \mathbb N$, we denote $P_N$: $X^* \to (\sum_{k=1}^N X_k^*)_{\ell_1}$ the norm-one projection on the $N$ first coordinates. Fix $\delta >0$ and let $(x_n)_n$ be a $\delta$-separated sequence in the unit ball of $X^*$. Fix also $\varepsilon>0$. Now two cases may occur.

\textbf{First case.} There exist $N\in \mathbb N$ such that there is $(x_n')_n$ a subsequence of $(x_n)_n$ satisfying $d(x_n',\sum_{k=1}^N X_k^*) \leq \delta/4$. Then a straightforward computation using the triangle inequality shows that $(P_N(x_n'))_n$ is $(\delta/2)$-separated. Thus, according to Proposition \ref{sumfinite}, $(P_N(x_n'))_n$ admit a subsequence $(\frac{2K+\varepsilon}{\delta})$-equivalent to the $\ell_1$-basis. For convenience we still denote the same way the subsequence considered. Now consider $(a_i)_{i=1}^m \in \R^m$, and let us estimate the following norm
\begin{eqnarray*}
\left\|\sum_{i=1}^m a_i x_i' \right\| &\geq& \left \|\sum_{i=1}^m a_i P_N(x_i') \right\| \geq \frac{\delta}{2K+\varepsilon} \sum_{i=1}^m |a_i|.
\end{eqnarray*}
This ends the first case.
 
\textbf{Second case.} For every $N \in \mathbb N$ and every subsequence $(x_n')_n$, there exist $n$ such that $d(x_n',\sum_{k=1}^N X_k^*)>\frac{\delta}{4}$. Passing to a subsequence and using \cite[Lemma 2.13]{pet} we can assume that $(x_n)_n$ is $w^*$ convergent to $0$ and that $\|x_n\| \geq \frac{\delta}{2}$ for every $n$. We will construct by induction a subsequence with the desired property. To achieve this, fix $(\varepsilon_i)_i$ a sequence of positives real numbers smaller than $\frac{\delta}{4}$ such that $\prod_{i=1}^{+\infty} (1-\varepsilon_i) \geq 1-\varepsilon$ and take $C:=4\sum_{k=1}^{+\infty} \frac{\varepsilon_k}{\delta}< \ep$. We begin with the construction of a sequence in $X$ very close to $(x_n)_n$ which is equivalent to the $\ell_1$-basis, and after this we will deduce what we want from the principle of small perturbations (see for example \cite[Theorem 1.3.9]{albiac}). More precisely we will construct a sequence $(P_{K_i}(x_{n_i}))_i$ which is $\frac{4}{\delta(1-\ep)}$-equivalent to the $\ell_1$-basis and such that $\|P_{K_i}(x_{n_i})-x_{n_i}\| \leq \varepsilon_i$. 

\textit{First of all}, we set $n_1=1$ and $N_1 \in \mathbb N$ such that $\|P_{N_1}x_{n_1}\| \geq \|x_{n_1}\| - \varepsilon_1$.

\textit{Construction of $n_2>n_1$.} Since $P_{N_1}$ is $w^*$-continuous, $(P_{N_1}(x_n))_n$ is $w^*$-null. We apply \cite[Lemma 1.5.1]{albiac}, so there exists $m>n_1$ such that for all $n\geq m$ and for all $(\lambda_1,\lambda_2)\in \R^2$, $$\|\lambda_1 P_{N_1}(x_{n_1}) + \lambda_2 P_{N_1}(x_n) \| \geq (1- \varepsilon_2) \|\lambda_1P_{N_1}(x_{n_1})\|.$$
Now, using the assumption of the second case, there exist $n_2\geq m$ such that $\|x_{n_2} - P_{K_1}(x_{n_2})\| > \frac{\delta}{4}$. We then pick $N_2 > N_1$ such that \linebreak $\|P_{N_2}(x_{n_2}) - P_{N_1}(x_{n_2}) \| > \frac{\delta}{4}$ and $\|P_{N_2}(x_{n_2})-x_{n_2}\| < \varepsilon_{2}$. Next the following inequalities hold:
\begin{eqnarray*}
\left\|\lambda_1 P_{N_1}(x_{n_1}) + \lambda_2 P_{N_2}(x_{n_2})\right \| &=& \|\lambda_1 P_{N_1}(x_{n_1}) + \lambda_2 P_{N_1}(x_{n_2})\| \\
&+& \|\lambda_2 [P_{N_2}-P_{N_1}](x_{n_2})\| \\
&>& (1-\varepsilon_2) \|\lambda_1 P_{N_1}(x_{n_1})\| + |\lambda_2|\frac{\delta}{4} \\
&>& (1-\varepsilon_1)(1-\varepsilon_2)\frac{\delta}{4}|\lambda_1|+ |\lambda_2|\frac{\delta}{4} \\
&>& (1-\varepsilon_1)(1-\varepsilon_2)\frac{\delta}{4}(|\lambda_1|+|\lambda_2|).
\end{eqnarray*}
We continue this construction by induction to get a sequence $(P_{N_i}(x_{n_i}))_i$ which is $\frac{4}{\delta(1-\ep)}$-equivalent to the $\ell_1$-basis and verifying $\|P_{N_i}(x_{n_i})-x_{n_i}\| \leq \nolinebreak\varepsilon_i$. By choice we have $C<\varepsilon$. Thus we can apply the principle of small perturbations which gives the following inequalities 
\begin{eqnarray*}
\left \|\sum_{i=1}^m a_i x_{n_i}\right \| &\geq& \frac{1-C}{1+C }\left \|\sum_{i=1}^m a_i P_{N_i}(x_{n_i}) \right \| \\
&\geq& \frac{1-\varepsilon}{1+\varepsilon} (1-\varepsilon)\frac{\delta}{4}  \sum_{i=1}^m |a_i| \\
&\geq& \frac{\delta}{4}\frac{(1-\varepsilon)^2}{1+\varepsilon} \sum_{i=1}^m |a_i|,
\end{eqnarray*}
for every $(a_i)_{i=1}^m \in \R^m$.
This ends the second case and finishes the proof.
\end{proof}

\section{Norm attainment} \label{na}
\bigskip

Given a metric space $M$ and a Banach space $X$, notice that the equality $Lip(M,X)=L(\mathcal F(M),X)$ yields two natural definition of norm attainment for a function $f\in Lip(M,X)$. On the one hand, if we see $f$ as a linear operator, we can consider the classical definition of norm attainment. On the other hand, considering $f\in Lip(M,X)$, we say that $f$ \textit{strongly attains its norm} if there are two different points $x,y\in M$ such that $\Vert f(x)-f(y)\Vert= \Vert f\Vert d(x,y)$. A natural question here is wondering when both concepts of norm-attainment agree and, connected with this, wondering about denseness of the class of Lipschitz functions which strongly attain their norm in $Lip(M,X)$.

We will mean by $Lip_{SNA}(M,X)$ (respectively $NA(\F(M),X)$) to the class of all functions in $Lip(M,X)$ which strongly attain its norm (respectively which attain its norm as a linear and continuous operator from $\mathcal F(M)$ to $X$). Nice results have been recently appeared in this line. On the one hand, negative results can be found in \cite{kadets}, where it is proved that $Lip_{SNA}(X)$ is not dense in $Lip(X)$ whenever $X$ is a Banach space \cite[Theorem 2.3]{kadets}. On the other hand, positive results in this line appear in \cite{godsurvey}, where it is proved that if $M$ is a compact metric space such that $lip(M)$ separates points uniformly and if $E$ is finite dimensional, then $Lip_{SNA}(M,E)$ is norm-dense in $Lip(M,E)$. In the following we will use tensor product theory in order to generalise previous result by considering proper metric spaces as well as by considering more general target spaces.

We shall begin by stating the scalar case of previous result. This can be seen as a generalisation of \cite[Proposition 5.3]{godsurvey}, though we will actually follow the same ideas.

\begin{proposition} \label{reallip}
Let $M$ be a proper metric space such that $S(M)$ separates points uniformly. Then every $f \in Lip(M)$ which attains its norm on $\F(M)$ also strongly attains it. In other words, the following equality holds: 
$$NA(\F(M),\mathbb R)= Lip_{SNA} (M,\mathbb R).$$
Therefore, $\overline{Lip_{SNA}(M,\mathbb R)}^{\|\, \cdot \,\|} = Lip_0(M)$.
\end{proposition}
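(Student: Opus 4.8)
The inclusion $Lip_{SNA}(M,\mathbb{R}) \subseteq NA(\mathcal{F}(M),\mathbb{R})$ is immediate and needs no hypothesis: if $f$ strongly attains its norm at $x\neq y$, then writing $m_{xy} := \frac{\delta_x-\delta_y}{d(x,y)} \in S_{\mathcal{F}(M)}$ for the corresponding molecule, one has $|\langle f, m_{xy}\rangle| = \frac{|f(x)-f(y)|}{d(x,y)} = \|f\|$, so $f$ attains its operator norm at $m_{xy}$. Moreover, once the equality $NA(\mathcal{F}(M),\mathbb{R}) = Lip_{SNA}(M,\mathbb{R})$ is established, the final assertion is just the Bishop--Phelps theorem: the norm-attaining functionals are norm-dense in $\mathcal{F}(M)^* = Lip_0(M)$. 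Thus the whole content is the reverse inclusion, which I plan to prove. Fix $f \in NA(\mathcal{F}(M),\mathbb{R})$ with $\|f\|=1$ and pick $\gamma \in S_{\mathcal{F}(M)}$ with $\langle f,\gamma\rangle = 1$; the goal is to produce $x\neq y$ with $|f(x)-f(y)| = d(x,y)$.

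The plan is to transport the problem to a space of measures via a de Leeuw-type map. Let $\Delta \subseteq M\times M$ be the diagonal and $W := (M\times M)\setminus\Delta$, which is locally compact because $M$ is proper. Define $\Psi f(x,y) := \frac{f(x)-f(y)}{d(x,y)}$ for $(x,y)\in W$; then $\Psi f$ is continuous and bounded on $W$ with $\|\Psi f\|_\infty = \|f\|$. The key preliminary observation is that, for $g \in S(M)$, the two defining conditions of $S(M)$ (the little-Lipschitz condition near the diagonal and the flatness at infinity), together with properness, force $\{\,|\Psi g|\geq\varepsilon\,\}$ to be a compact subset of $W$ for every $\varepsilon>0$; hence $\Psi$ maps $S(M)$ into $\mathcal{C}_0(W)$, with $\|\Psi g\|_\infty=\|g\|$. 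Since $S(M)$ separates points uniformly we have $S(M)^*=\mathcal{F}(M)$ by \cite{dal2}, so the adjoint $q := (\Psi|_{S(M)})^* \colon M(W) = \mathcal{C}_0(W)^* \to S(M)^* = \mathcal{F}(M)$ is a norm-one, weak-star-to-weak-star continuous operator satisfying $q(\delta_{(x,y)}) = m_{xy}$.

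The core step uses atomic representations of $\gamma$. By the bipolar theorem the unit ball of $\mathcal{F}(M)$ is the norm-closed absolutely convex hull of the molecules, so for each $j$ there is a norm-convergent series $\gamma = \sum_n c_n^{(j)} m_{x_n^{(j)},y_n^{(j)}}$ with $\sum_n |c_n^{(j)}| \leq 1+\varepsilon_j$, where $\varepsilon_j \downarrow 0$. Because $f$ is norm-continuous, $1 = \langle f,\gamma\rangle = \sum_n c_n^{(j)}\,\Psi f(x_n^{(j)},y_n^{(j)})$, and since $|\Psi f|\leq 1$ this forces the mass-concentration estimate $\sum_n |c_n^{(j)}|\big(1-|\Psi f(x_n^{(j)},y_n^{(j)})|\big) \leq \varepsilon_j$. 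Setting $\mu_j := \sum_n c_n^{(j)}\delta_{(x_n^{(j)},y_n^{(j)})} \in M(W)$ we get $q\mu_j = \gamma$ and $\|\mu_j\|\leq 1+\varepsilon_j$. Let $\mu$ be a weak-star cluster point of $(\mu_j)$ in $M(W)$. Weak-star continuity of $q$ gives $q\mu=\gamma$, whence $1 = \|\gamma\| \leq \|\mu\| \leq 1$, so $\|\mu\|=1$. This norm preservation is the crux: it certifies that no mass of the $\mu_j$ escapes to the ends of the non-compact space $W$.

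It then remains to locate the support of $\mu$. For fixed $\eta>0$ the set $U_\eta := \{(x,y)\in W : |\Psi f(x,y)| < 1-\eta\}$ is open, and the concentration estimate gives $|\mu_j|(U_\eta) \leq \varepsilon_j/\eta$; testing $\mu$ against functions in $\mathcal{C}_0(W)$ supported in $U_\eta$ and passing to the weak-star limit yields $|\mu|(U_\eta)=0$, for every $\eta$. Hence $|\mu|$ is concentrated on $\{\,|\Psi f|=1\,\}$, and as $\|\mu\|=1>0$ this set is nonempty; any of its points gives $x\neq y$ with $|f(x)-f(y)|=d(x,y)=\|f\|\,d(x,y)$, that is $f\in Lip_{SNA}(M,\mathbb{R})$. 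I expect the genuine obstacle to be exactly the reason this detour is needed: $f$ lives in $Lip(M)=\mathcal{F}(M)^*=S(M)^{**}$ and is \emph{not} weak-star continuous on $\mathcal{F}(M)$ in general, so one cannot simply maximize $f$ over the weak-star compact ball or push it through an extreme-point argument. Representing $\gamma$ by atomic measures (where the pairing with $f$ is legitimate by norm convergence) and then controlling the weak-star limit through the norm-preserving lift $q$ is what circumvents this; verifying that the limiting measure keeps full mass inside $W$, i.e.\ that the near-maximizing molecules do not run off to the diagonal or to infinity, is the heart of the matter.
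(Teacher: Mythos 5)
Your proof is correct, and it takes a genuinely different route from the paper's. The paper's argument is structural: it invokes Dalet's result that $S(M)$ is $(1+\varepsilon)$-isomorphic to a subspace of $c_0$, deduces that $S(M)$ is an M-ideal in its bidual, applies Godefroy's lemma on norm attainment over duals of M-embedded spaces to locate a norm-attaining point in $\F(M)\cap Ext(B_{Lip_0(M)^*})$, and then uses Weaver's identification of these preserved extreme points with the molecules $\frac{\delta_x-\delta_y}{d(x,y)}$. You instead inline everything via the de Leeuw transform: the observation that the two defining conditions of $S(M)$ plus properness push $\Psi(S(M))$ into $\mathcal C_0(W)$ is exactly right, the adjoint $q\colon M(W)\to S(M)^*=\F(M)$ is indeed a norm-one weak-star continuous lift sending Dirac measures to molecules, the near-optimal atomic representations of $\gamma$ (available since $B_{\F(M)}$ is the closed absolutely convex hull of the molecules) give the mass-concentration estimate, and the norm identity $\|\mu\|=1$ for the weak-star cluster point correctly rules out escape of mass to the diagonal or to infinity, forcing $|\mu|$ to charge $\{|\Psi f|=1\}$. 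Both proofs hinge on the same two inputs — the duality $S(M)^*=\F(M)$ and properness — but where the paper outsources the hard work to the M-ideal machinery and the extreme-point theorem, you essentially reprove the relevant special case from scratch; the cost is the measure-theoretic bookkeeping, and the gain is a self-contained argument that also makes transparent \emph{why} the maximizing molecules cannot degenerate. One small presentational point: you should note that the case $\|f\|=0$ is trivial and that the normalization $\|f\|=1$ is harmless by homogeneity, and that cluster points (rather than convergent subsequences) suffice throughout since $\mathcal C_0(W)$ need not be separable — but you in fact phrase everything in terms of cluster points, so this is fine.
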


\begin{proof}
Notice that the inclusion $Lip_{SNA}(M,\mathbb R)\subseteq NA(\mathcal F(M),\mathbb R)$ always holds, so we are just going to prove the reverse one. For this, pick $f\in Lip(M)$ a Lipschitz function which attains its norm as an element of $L(\mathcal F(M),\mathbb R)$. By \cite[Lemma 3.9]{dal2} we get that, for every $\ep >0$, $S(M)$ is $(1+\ep)$-isomorphic to a subspace of $c_0$. Therefore, since the property of being an M-ideal in its bidual is invariant under almost isometric isomorphism and by taking subspaces (see Theorem 3.1.6 and Remark 1.7 in \cite{wernermideal}), the space $S(M)$ is an M-ideal in its bidual. 

Consequently, by \cite[Lemma 5.2]{godsurvey} we get that $f$ attains its norm on some $\gamma \in \F(M) \cap Ext(Lip_0(M)^*)$. But \cite[Corollary 2.5.4]{weaver} implies that $\gamma$ is of the form $\gamma = \lambda \frac{\delta(x)-\delta(y)}{d(x,y)}$ where $x \neq y \in M$ and $\lambda \in \mathbb R$  is such that $|\lambda| = 1$, from where it is clear that $f$ strongly attain its norm. The final assertion is a consequence of Bishop-Phelps theorem.
\end{proof}

We now turn to the study of vector-valued Lipschitz functions. Next result proves that, under assumptions of having a proper metric space $M$, both concepts of norm attainment in $Lip(M,X)$ actually are the same whenever $S(M)$ is a predual of $\mathcal F(M)$.

\begin{proposition} \label{vectorcase}
Let $M$ be a proper metric space such that $S(M)$ separates points uniformly, and let $X$ be a Banach space. Then, for a Lipschitz function $f$: $M \to X$, the following are equivalent:
\begin{enumerate}
\item $f$ strongly attains its norm.
\item $f$: $\F(M) \to X$ attains its operator norm.
\end{enumerate}
Thus the following equality holds: $NA(\F(M),X)= Lip_{SNA} (M,X)$
\end{proposition}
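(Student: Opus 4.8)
The plan is to prove the equivalence of the two notions of norm attainment by reducing the vector-valued case to the scalar result already established in Proposition \ref{reallip}. The implication $(1)\Rightarrow(2)$ is immediate: if $f$ strongly attains its norm at two distinct points $x,y\in M$, meaning $\|f(x)-f(y)\|=\|f\|\,d(x,y)$, then the molecule $m_{x,y}:=\frac{\delta(x)-\delta(y)}{d(x,y)}$ is a norm-one element of $\F(M)$ on which the operator $f\colon\F(M)\to X$ satisfies $\|f(m_{x,y})\|=\|f\|$, so $f$ attains its operator norm there. Thus the inclusion $Lip_{SNA}(M,X)\subseteq NA(\F(M),X)$ always holds, and the content of the proposition lies entirely in the reverse implication.

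For $(2)\Rightarrow(1)$, suppose $f\colon\F(M)\to X$ attains its operator norm at some $\gamma\in S_{\F(M)}$, so $\|f(\gamma)\|=\|f\|$. The key idea is to compose with a suitable functional to convert this into scalar norm attainment. Pick $x^*\in S_{X^*}$ with $x^*(f(\gamma))=\|f(\gamma)\|=\|f\|$; such a functional exists by Hahn--Banach. Then $g:=x^*\circ f\in Lip(M)$ is a scalar Lipschitz function with $\|g\|\leq\|f\|$, and $g(\gamma)=x^*(f(\gamma))=\|f\|\geq\|g\|$, which forces $\|g\|=\|f\|$ and shows that $g$ attains its norm at $\gamma$ as an element of $L(\F(M),\R)=NA(\F(M),\R)$. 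By Proposition \ref{reallip}, under the hypothesis that $S(M)$ separates points uniformly, the scalar function $g$ then strongly attains its norm; tracing through its proof, $g$ attains its norm on an extreme point $\gamma'=\lambda\,m_{x,y}$ with $x\neq y\in M$ and $|\lambda|=1$.

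The remaining step, which I expect to be the main obstacle, is to transfer this strong norm attainment from $g=x^*\circ f$ back to $f$ itself. The issue is that $g$ may strongly attain its norm at a pair $(x,y)$ different from any pair witnessing norm attainment of the operator $f$, and passing through $x^*$ can only lose information about $\|f(x)-f(y)\|$. The clean way around this is to exploit the extreme-point structure directly: since $S(M)$ is an M-ideal in its bidual (as in the proof of Proposition \ref{reallip}), the operator $f$ attains its norm at an element $\gamma$ that can be taken to be an extreme point of $B_{Lip_0(M)^*}$ lying in $\F(M)$, via the vector-valued analogue of \cite[Lemma 5.2]{godsurvey} applied with the preduality $S(M,X)^*=\F(M,X^*)$ from Theorem \ref{predulibrevectorproper}. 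By \cite[Corollary 2.5.4]{weaver} any such extreme point has the form $\gamma=\lambda\,m_{x,y}$ with $x\neq y$ and $|\lambda|=1$, and then
\[
\|f(x)-f(y)\|=d(x,y)\,\|f(\lambda\, m_{x,y})\|=d(x,y)\,\|f(\gamma)\|=d(x,y)\,\|f\|,
\]
so $f$ strongly attains its norm at $(x,y)$. This yields the inclusion $NA(\F(M),X)\subseteq Lip_{SNA}(M,X)$ and hence the desired equality.
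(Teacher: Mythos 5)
Your first two paragraphs follow the paper's argument exactly, and you are in fact one line from being done at the end of the second paragraph: the ``obstacle'' you identify in the third paragraph does not exist. Since $\|x^*\|=1$ you have $\|g\|\leq\|f\|$, and norm attainment of $g$ at $\gamma$ forces $\|g\|=\|f\|$, as you note. Now if $g$ strongly attains its norm at a pair $x\neq y$ given by Proposition \ref{reallip}, then
\[
\|f\|\,d(x,y)=\|g\|\,d(x,y)=|g(x)-g(y)|=|x^*(f(x)-f(y))|\leq\|f(x)-f(y)\|\leq\|f\|\,d(x,y),
\]
so all the inequalities are equalities and $f$ strongly attains its norm at the \emph{same} pair $(x,y)$. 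No information is lost in passing through $x^*$, precisely because $g$ realises the full Lipschitz constant of $f$ at that pair. This is exactly what the paper's (very terse) proof relies on.

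The substitute argument in your third paragraph, by contrast, has genuine gaps. First, invoking the preduality $S(M,X)^*=\F(M,X^*)$ from Theorem \ref{predulibrevectorproper} requires that $\F(M)$ or $X^*$ have the approximation property, a hypothesis not present in this proposition. Second, and more seriously, an M-ideal/extreme-point lemma in the vector-valued setting would concern $f$ viewed as an element of $S(M,X)^{**}=Lip(M,X^{**})=(\F(M)\pten X^*)^*$ attaining its norm \emph{as a linear form on} $\F(M)\pten X^*$; but hypothesis (2) only gives that $f$ attains its \emph{operator} norm on $\F(M)$, and these two notions do not coincide in general --- that is precisely the distinction addressed by Lemma \ref{lemext}, whose converse direction requires reflexivity. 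You also appeal to an unproved ``vector-valued analogue'' of \cite[Lemma 5.2]{godsurvey} and to an extreme-point description that would need the Ruess--Stegall identification. None of this machinery is needed: delete the third paragraph and conclude with the displayed chain of inequalities above.
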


\begin{proof}
We just have to prove $(2) \Rightarrow (1)$ since the other way is always true and trivial. Assume that $\gamma \in \F(M)$ is such that $\|\gamma\| \leq 1$ and $\|f(\gamma) \|= \|f\|_{Lip}$. Then, by Hahn-Banach theorem, there exists $x^* \in S_{X^*} $ verifying $\la x^* , f(\gamma) \ra = \|f(\gamma)\|$. But $x^* \circ f$: $M\to \R$ is a real-valued Lipschitz function which attains its operator norm on $\gamma$. Thus Proposition \ref{reallip} gives the conclusion.
\end{proof}

Since Bishop-Phelps theorem fails in the vector-valued case, we can not deduce directly the same density result as in Proposition \ref{reallip}. However we can state such a density result for a quite big class of Banach spaces using tensor product theory and considering natural assumptions in this frame. First of all we prove the following lemma, which will be useful to get the desired density result for vector-valued Lipschitz functions.

\begin{lemma} \label{lemext}
Let $X$ and $Y$ be Banach spaces such that either $X^*$ or $Y^*$ has (AP) and that $X^*$ and $Y^*$ have both (RNP). If an operator $T$ in \linebreak $L(X^*,Y^{**}) = (X^* \hat\otimes_\pi Y^*)^*$ attains its norm as linear form on $X^* \hat\otimes_\pi Y^*$, then it also attains its norm as an operator on $X^*$. The converse is true when $Y$ is reflexive.
\end{lemma}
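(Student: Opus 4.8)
The plan is to exploit the hypotheses to turn $X^*\pten Y^*$ into a dual space with the Radon--Nikod\'ym property (RNP), and then to locate a norm-attaining elementary tensor among the extreme points of its unit ball. Write $Z=X\iten Y$. Since $X^*$ or $Y^*$ has (AP) and both have (RNP), \cite[Theorem 5.33]{rya} provides the isometric identification $Z^*=X^*\pten Y^*=L(X^*,Y^{**})$, under which $T$ acts on an elementary tensor by $\langle T, x^*\otimes y^*\rangle=\langle T(x^*),y^*\rangle$, and $\|T\|$ is simultaneously its operator norm and its norm as a functional on $Z^*$.

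The first key point I would establish is that $Z^*$ itself has (RNP). Indeed, $X^*$ and $Y^*$ having (RNP) means exactly that $X$ and $Y$ are Asplund, and it is known that the injective tensor product of Asplund spaces is again Asplund (the (AP) being available to identify the dual); equivalently, $Z^*=(X\iten Y)^*$ has (RNP). This is the crucial structural input.

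Now I would argue as follows. Suppose $T$ attains its norm as a functional, say $\langle T,u\rangle=\|T\|$ with $u\in B_{Z^*}$. Consider the face $F=\{v\in B_{Z^*}:\langle T,v\rangle=\|T\|\}$, which is a nonempty, norm-closed, bounded, convex subset of $B_{Z^*}$ and therefore inherits (RNP). Hence $F$ possesses an extreme point, and since $F$ is a face of $B_{Z^*}$, any such point $v$ is extreme in $B_{Z^*}$. By the Ruess--Stegall description of the extreme points of the dual ball of an injective tensor product \cite{ruessstegal}, $v=x^*\otimes y^*$ for some $x^*\in \bx$ and $y^*\in B_{Y^*}$; since $\|x^*\otimes y^*\|_\pi=\|x^*\|\,\|y^*\|=1$ we may take $\|x^*\|=\|y^*\|=1$. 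Then $\|T(x^*)\|\geq \langle T(x^*),y^*\rangle=\langle T,v\rangle=\|T\|$, forcing $\|T(x^*)\|=\|T\|$, so $T$ attains its operator norm. For the converse under $Y$ reflexive, attainment of the operator norm at some $x_0^*\in S_{X^*}$ gives $T(x_0^*)\in Y^{**}=Y$; reflexivity makes $B_{Y^*}$ weakly compact, so $T(x_0^*)$ attains its norm at some $y_0^*\in B_{Y^*}$, and then $x_0^*\otimes y_0^*$ witnesses norm attainment of $T$ as a functional.

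The hard part is precisely the passage from ``$T$ attains its norm as a functional'' to an honest extreme point of $B_{Z^*}$. The subtlety is that $T$ lives in the bidual $Z^{**}$ and need not be weak-star continuous, so $F$ need not be weak-star compact and a naive Krein--Milman or compactness argument fails. Invoking (RNP) of $Z^*$ circumvents this entirely, since in a space with (RNP) every closed bounded convex set---regardless of its behaviour for the weak-star topology---is the closed convex hull of its strongly exposed, in particular extreme, points. A secondary point to verify carefully is the normalization of the factors via $\|x^*\otimes y^*\|_\pi=\|x^*\|\,\|y^*\|$, and the exact role of reflexivity in the converse: without it, $T(x_0^*)\in Y^{**}\setminus Y$ may fail to attain its norm on $B_{Y^*}$.
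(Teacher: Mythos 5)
Your proposal is correct and follows essentially the same route as the paper: use the Radon--Nikod\'ym property of $X^* \pten Y^*$ to locate a point of norm attainment among the extreme points of its unit ball, invoke the Ruess--Stegall description of those extreme points as elementary tensors, and use Hahn--Banach together with reflexivity of $Y$ for the converse. Your version is in fact slightly more careful (the face argument justifying why a norm-attaining functional on an (RNP) set attains its norm at an extreme point, and the normalization of the tensor factors), and your derivation of (RNP) for $X^*\pten Y^*$ via Asplundness of $X\iten Y$ is just a repackaging of the direct citation the paper uses.
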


\begin{proof}
Assume that $T$ attains its norm as linear form on $X^* \hat\otimes_\pi Y^*$. Since $X^* \hat\otimes_\pi Y^*$ has (RNP) \cite[Theorem VIII.4.7]{vectormeasures}, $T$ attains its norm at some extreme point of the unit ball of this space . Bearing in mind that $Ext(B_{X^* \pten Y^*}) \linebreak = Ext(B_{X^*}) \otimes Ext(B_{Y^*})$ \cite{ruessstegal} we get the existence of $x^* \in Ext(B_{X^*})$ and $y^* \in Ext(B_{Y^*})$ such that 
$$\la y^* , T(x^*) \ra = \la T , x^* \otimes y^* \ra = \|T\|_{(X^* \hat\otimes_\pi Y^*)^*} = \|T\|_{L(X^*,Y^{**})},$$ 
and obviously $\|T(x^*)\|_{Y} = \|T\|_{L(X^*,Y^{**})}$.
 
Conversely we assume that $Y$ is reflexive, so $L(X^*,Y^{**}) = L(X^*,Y)$. Then, if $T$ attains its norm as an operator on $X^*$, there exists $x^* \in S_{X^*}$ such that $\|T(x^*)\|_{Y} = \|T\|_{L(X^*,Y)}$. Now, by Hahn-Banach theorem, there exists $y^* \in B_{Y^{*}}$ such that $\la y^* , T(x^*) \ra = \|T(x^*)\|_{Y}$. Therefore $T$ attains its norm on $x^* \otimes y^* \in X^* \hat\otimes_\pi Y^*$.
\end{proof}

We now state and prove our desired density result for vector-valued Lipschitz functions.

\begin{theorem} \label{vectorna}
Let $(M,d)$ be a proper metric space such that $S(M)$ separates points uniformly. Let $X$ be a Banach space such that $X^*$ has (RNP). Assume that either $\F(M)$ or $X^*$ has (AP). Then 
$$\overline{NA (\F(M),X^{**})}^{\|\, \cdot \, \|} =  L(\F(M),X^{**}).$$ 
Equivalently, $\overline{Lip_{SNA} (M,X^{**})}^{\|\, \cdot \, \|}= Lip(M,X^{**})$.
\end{theorem}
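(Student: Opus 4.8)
The plan is to reduce the vector-valued density statement to a tensor-product density result about norm-attaining operators, exploiting all the identifications established earlier in the paper. The key observation is that $L(\mathcal F(M), X^{**})$ can be rewritten as a dual of a projective tensor product, and that norm-attainment as an operator corresponds, via Lemma \ref{lemext}, to norm-attainment as a linear form on that tensor product. So the density of $NA(\mathcal F(M), X^{**})$ will follow from a Bishop-Phelps-type density result for norm-attaining functionals on a Banach space with good geometric structure.

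First I would set up the identifications. By Theorem \ref{predulibrevectorproper} (applicable since $S(M)$ separates points uniformly and $\mathcal F(M)$ or $X^*$ has (AP)), we have $S(M,X)^* = \mathcal F(M,X^*) = \mathcal F(M) \pten X^*$. Dualizing once more, $L(\mathcal F(M), X^{**}) = \mathcal F(M,X^*)^* = (\mathcal F(M) \pten X^*)^*$, so the space we want to fill with norm-attaining operators is the dual of the Asplund-type predual $S(M,X) = S(M) \iten X$. The strategy is then: since $S(M,X)^{**} = L(\mathcal F(M),X^{**})$, an operator in $L(\mathcal F(M),X^{**})$ is just a bounded linear functional on $\mathcal F(M) \pten X^* = \mathcal F(M,X^*)$, and I want those that attain their norm on $\mathcal F(M,X^*)$ to be dense.

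Next I would invoke the classical Bishop-Phelps theorem: the set of norm-attaining functionals on the Banach space $\mathcal F(M) \pten X^*$ is norm-dense in its dual. This gives density of functionals attaining their norm as linear forms on $\mathcal F(M) \pten X^*$. The crucial bridge is then Lemma \ref{lemext}, which (taking $X := \mathcal F(M)^*$-predual data appropriately, i.e. applying it with the roles $X^* = \mathcal F(M)$ and $Y^* = X^*$, so that $Y = X^{**}$ and $Y^{**} = X^{****}$ — one must check the indices carefully) converts norm-attainment as a linear form into norm-attainment as an operator on $\mathcal F(M)$. The hypotheses of Lemma \ref{lemext} are met: either $\mathcal F(M)$ or $X^*$ has (AP), and both $\mathcal F(M)$ (being a dual since $S(M)^* = \mathcal F(M)$, hence with (RNP) as $S(M)$ is Asplund) and $X^*$ have (RNP). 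Thus every functional attaining its norm as a linear form yields an operator in $NA(\mathcal F(M), X^{**})$, and density transfers. Finally, the equivalence with $\overline{Lip_{SNA}(M,X^{**})} = Lip(M,X^{**})$ follows immediately from Proposition \ref{vectorcase} applied with target space $X^{**}$, which identifies $NA(\mathcal F(M),X^{**})$ with $Lip_{SNA}(M,X^{**})$ under the standing hypothesis on $M$.

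The main obstacle I expect is the careful bookkeeping in applying Lemma \ref{lemext}: the lemma is stated for $L(X^*,Y^{**}) = (X^* \pten Y^*)^*$, and here I must instantiate it so that the tensor product reads $\mathcal F(M) \pten X^*$ with $\mathcal F(M)$ playing the role of a dual space $X^*$ (legitimate since $\mathcal F(M) = S(M)^*$) and $X^*$ playing the role of $Y^*$. One must verify that the (RNP) and (AP) hypotheses are correctly matched under this relabeling, and that the direction of the implication used in Lemma \ref{lemext} (norm-attainment as a form $\Rightarrow$ norm-attainment as an operator) is the one that does not require reflexivity of $Y$. Once the relabeling is pinned down, the rest is a routine chaining of Bishop-Phelps density, Lemma \ref{lemext}, and Proposition \ref{vectorcase}.
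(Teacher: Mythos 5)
Your proposal follows the paper's own proof essentially verbatim: Bishop--Phelps applied to $(\mathcal F(M)\pten X^*)^*=L(\mathcal F(M),X^{**})$ gives density of the forms attaining their norm on $\mathcal F(M)\pten X^*$, Lemma \ref{lemext} (with $\mathcal F(M)=S(M)^*$ in the role of the dual factor, whose (RNP) comes from $S(M)$ being Asplund) upgrades attainment as a form to attainment as an operator, and Proposition \ref{vectorcase} yields the Lipschitz reformulation. The only blemish is the parenthetical relabeling ``$Y=X^{**}$, $Y^{**}=X^{****}$'': since you take $Y^*=X^*$, the correct instantiation is $Y=X$ and $Y^{**}=X^{**}$, which is in fact what you use in your final paragraph.
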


\begin{proof}
Bishop-Phelps theorem applied to $(\F(M) \hat\otimes_\pi X^*)^*= L(\F(M) \hat\otimes_\pi X^* , \mathbb R)$ provides the norm-denseness of the set of those linear forms which attain their norm. But according to Lemma \ref{lemext}, if $f$ attains its norm as linear form on $\F(M) \hat\otimes_\pi X^*$, then $f$ also attains its norm as an operator defined on $\F(M)$. This provides the result. Finally, last assertion follows from Proposition \ref{vectorcase}.
\end{proof}

\section{Some remarks and open questions} \label{rema}
\bigskip

Note that in \cite{dal2} it is proved that, given a proper metric space $M$, then $S(M)$ is $(1+\varepsilon)$-isometric to a subspace of $c_0$ for every $\varepsilon>0$. So, we wonder.

\begin{question}\label{quec0isometric} Let $M$ and $\tau$ be under the hypothesis of Theorem \ref{tauduality}. Is $lip_\tau(M)$ $(1+\varepsilon)$-isometric to a subspace of $c_0$ for every $\varepsilon>0$? 
\end{question}

Note that an affirmative answer to above question would imply that $lip_\tau(M)$ would be an $M$-embedded space. This would have two implications. On the one hand, given a Banach space $X$ such that $\mathcal F(M)$ or $X^*$ has (AP), $lip_\tau(M,X)=lip_\tau(M)\iten X$ would be an unconditionally almost square Banach space \cite{gr} and, consequently, it would not be a dual Banach space, extending Proposition \ref{UASQparanodual}. On the other hand, such a metric space $M$ would satisfy the thesis of Proposition \ref{reallip}.

Another question from Section \ref{schur} is the following.

\begin{question}
Let $M$ be a metric space and let $X$ be a Banach one. If both $\mathcal F(M)$ and $X$ have the Schur property, can we deduce that $\mathcal F(M,X)$ has the Schur property?
\end{question}

Note that an affirmative answer holds for $X=\ell_1(I)$, for any arbitrary set $I$, since 
\[ \mathcal F(M,\ell_1(I))=\mathcal F(M)\widehat{\otimes}_\pi \ell_1(I)=\ell_1(I, \mathcal F(M))\]
has the Schur property if, and only if, $\mathcal F(M)$ has the Schur property \cite[Proposition, Section 2]{tan}. The same conclusion holds whenever $M$ is a proper ultrametric space because in this case $\mathcal F(M)$ is isomorphic to $\ell_1$ \cite{dal2}

\textbf{Acknowledgements}: This work was done when the first and the third authors visited the Laboratoire de Math\'ematiques de Besan\c{c}on, for which the first author was supported by a grant from Programa de Contratos Predoctorales (FPU) de la Universidad de Murcia and the third author was supported by a grant from Vicerrectorado de Internacionalizaci\'on y Vicerrectorado de Investigaci\'on y Transferencia de la Universidad de Granada, Escuela Internacional de Posgrado de la Universidad de Granada y el Campus de Excelencia Internacional (CEI) BioTic. These authors are deeply grateful to the Laboratoire de Math\'ematiques de Besan\c{c}on for their hospitality during the stay.
Moreover, the authors thank to G. Lancien and  A. Proch\'azka for useful conversations which helped to improve the paper.

\end{document}